\newlist{todolist}{itemize}{2}
\setlist[todolist]{label=$\square$}
  \theoremstyle{plain}
  \newtheorem{theorem}{Theorem}[section]
  \newtheorem{lemma}[theorem]{Lemma}
  \newtheorem{cor}[theorem]{Corollary}
  \theoremstyle{remark}
  \theoremstyle{definition}
  \numberwithin{equation}{section}
  \numberwithin{theorem}{section}
\newtheorem{proposition}[theorem]{Proposition}
\newtheorem{definition}[theorem]{Definition}
\newcommand{\Z}{\mathbb{Z}}
\newcommand{\Q}{\mathbb{Q}}
\newcommand{\C}{\mathbb{C}}
\newcommand{\xra}{\xrightarrow}
\DeclareMathOperator{\diag}{diag}
\DeclareMathOperator{\GL}{GL}
\DeclareMathOperator{\SL}{SL}
\DeclareMathOperator{\Ind}{Ind}
\DeclareMathOperator{\Aut}{Aut}
\DeclareMathOperator{\rdim}{rdim}
\begin{document}


\author{Jonathan Cohen}
\title{Minimal groups of given representation dimension}

\begin{abstract}
For a finite group $G$, let $\rdim(G)$ denote the smallest dimension of a faithful, complex linear representation of $G$. It is clear that $\rdim(H)\leq \rdim(G)$ for any subgroup $H$ of $G$. We consider $G$ with the property that $\rdim(H)<\rdim(G)$ whenever $H$ is a proper subgroup of $G$, in particular proving a classification of such groups when $G$ is abelian or $\rdim(G)\leq 3$. 
\end{abstract}

\maketitle

\section{Introduction}


We say a finite group $G$ has representation dimension $n$, written $\rdim(G)$, if there is a faithful representation $\rho: G \to \GL_n(\C)$ and no faithful representation of dimension $n-1$. Clearly $\rdim(H)\leq \rdim(G)$ for every subgroup $H$ of $G$. In this article we consider the class of groups $G$ such that $\rdim(H)< \rdim(G)$ for all proper subgroups $H$ of $G$; we call these groups {\it minimally faithful of degree} $\rdim(G)$. 

If $G$ is abelian or $\rdim(G)=2$, such groups are straightforward to classify; see Lemma \ref{abelian} and Proposition \ref{Degree 2}. The main result of this paper is a classification of minimally faithful groups of degree 3.  We summarize this result here; let $C_n$ be a cyclic group of order $n$ and $Q_8$ the quaternion group of order 8.



\begin{theorem}
Let $G$ be a minimally faithful group of degree 3. Then $G$ is one of the following:

a) $C_p\times C_p\times C_p$ for a prime $p$,

b) $(C_p\times C_p)\rtimes C_{3^k}$ for a prime $p\equiv -1\pmod 3$, $k\geq 1$, with center $C_{3^{k-1}}$,

c) $C_p\rtimes C_{3^k}$ for a prime $p\equiv 1\pmod 3$, $k\geq 1$, with center $C_{3^{k-1}}$,

d) $C_{3^k}\rtimes C_3$ for $k\geq 2$, with center $C_{3^{k-1}}$,

e) The Heisenberg group of order 27,

f) $Q_8\times C_p\times C_p$ for an odd prime $p$,

g) $(C_{2^k}\rtimes C_2)\times C_p\times C_p$ where $k\geq 2$, $p$ is an odd prime, and $C_2$ acts on $C_{2^k}$ by $g\mapsto g^{1+2^{k-1}}$,

h) $(C_q\rtimes C_{2^m})\times C_p\times C_p$ where $m\geq 1$, $q, p$ are distinct odd primes, and a generator of $C_{2^m}$ acts by inversion on $C_q$,

i) $(C_p\rtimes C_{2^m}) \times C_2$ where $m>1$, $p$ is an odd prime, and a generator of $C_{2^m}$ acts by inversion on $C_p$,   

j) $Q_8\times C_2$,

k) A nonabelian 2-group whose index-2 subgroups are all abelian with two invariant factors. 

\end{theorem}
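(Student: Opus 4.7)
My plan is to decompose a faithful 3-dimensional representation $\rho:G\to \GL_3(\C)$ into irreducibles; since $\dim\rho=3$, there are three cases: (I) $\rho$ is a sum of three characters, (II) $\rho=\sigma\oplus\chi$ with $\sigma$ irreducible of dimension 2 and $\chi$ a character, (III) $\rho$ itself irreducible. Case (I) is immediate: every summand factors through the abelianization, so faithfulness forces $G$ to be abelian, and Lemma~\ref{abelian} gives $G\cong C_p\times C_p\times C_p$, case (a).

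In Case (II), set $N:=\ker\sigma$. Since $N\cap\ker\chi=1$, $N$ injects into the cyclic group $G/\ker\chi$ and is itself cyclic; and $\sigma$ descends to a faithful 2-dimensional irreducible representation of $G/N$, placing $G/N$ on the Degree 2 list of Proposition~\ref{Degree 2}. The minimality hypothesis then tightly restricts the extension $1\to N\to G\to G/N\to 1$ and the Sylow structure of $G$: when $|G|$ has distinct prime divisors contributing independent summands of $\rho$, $G$ splits as a direct product $A\times B$ of coprime-order groups, and comparing $\rdim(A\times B)$ with $\rdim(A)+\rdim(B)$ yields the mixed families (f)--(j); in particular case (f) uses $p$ odd to ensure that the kernel of $\rho\boxtimes\tau_1$ does not contain $Z(Q_8)$. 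When $G$ is a $p$-group, the same extension yields the 3-group families (b)--(e) (with $A_4$ appearing as the $p=2, k=1$ specialization of (b), and the Heisenberg group as (e)), together with the abstract 2-group class (k).

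Case (III) is handled via Blichfeldt's classification of finite irreducible subgroups of $\GL_3(\C)$. The primitive candidates (the $\GL_3$-lifts of $A_5$, $\PSL_2(\FF_7)$, $A_6$, and the Hessian groups) are all eliminated by minimality: each contains a proper subgroup of $\rdim=3$, such as $A_4\leq A_5$ or the Heisenberg group of order 27 inside the Hessian group. The imprimitive (monomial) subgroups of $\GL_3(\C)$ are of the form $M\rtimes C_3$ for an abelian normal subgroup $M$ of index 3 permuted cyclically by $G/M$; applying minimality to subgroups of $M$ and to the index-3 subgroups of $G$ reproduces (b), (c), (d), and (e).

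The main obstacle will be Case (II) and specifically the abstract 2-group class (k): the claim is that a nonabelian 2-group $G$ is minimally faithful of degree 3 if and only if every index-2 subgroup of $G$ is abelian with exactly two invariant factors. The forward direction applies Clifford theory to $\sigma|_H$ for each index-2 subgroup $H$ and combines this with $\rdim(H)\leq 2$ and the minimality of $G$; the analysis is delicate because one must rule out nonabelian $H$ and simultaneously pin down the number of invariant factors. The reverse direction requires an explicit construction of a faithful 3-dimensional representation on any such $G$. A secondary subtlety is the $\rdim$ computation for the mixed-order products (f)--(j), where one must verify that $\rdim(A\times B)$ equals 3 (rather than the naive 4 one would expect from $\rdim(A)+\rdim(B)$ when both factors have $\rdim = 2$) and that minimality is inherited from the factors.
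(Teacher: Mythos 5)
Your top-level trichotomy (sum of three characters / $2+1$ / irreducible) is a reasonable skeleton and is roughly how the paper's argument is implicitly organized, but the proposal contains a concrete false step at the point where all the real work begins. In Case (II) you assert that $\sigma$ descends to a faithful $2$-dimensional irreducible representation of $G/N$, ``placing $G/N$ on the Degree 2 list of Proposition \ref{Degree 2}.'' Having a faithful $2$-dimensional representation only gives $\rdim(G/N)\leq 2$; it does not make $G/N$ minimally faithful of degree $2$, and quotients of minimally faithful groups inherit no minimality. Indeed the target list itself refutes the claim: for $G=(C_{2^k}\rtimes C_2)\times C_p\times C_p$ (case (g)) one has $G/\ker\sigma\cong(C_{2^k}\rtimes C_2)\times C_p$, which is neither $Q_8$ nor of the form $C_q\rtimes C_{2^m}$. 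Without that shortcut, $\sigma(G)$ could a priori be any finite subgroup of $\GL_2(\C)$, including ones whose derived group is the binary tetrahedral, octahedral, or icosahedral group; eliminating those possibilities is exactly the content of the paper's theorem that $[G,G]$ is abelian (its Steps 2--4, which use the classifications of finite subgroups of $\SL_2(\C)$, $\GL_2(\C)$, and $PGL_2(\C)$ together with Lemma \ref{evenorderreducibility}). Your outline never confronts this, and the phrase ``the minimality hypothesis then tightly restricts the extension and the Sylow structure'' defers essentially the entire classification.

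Two further problems. First, families (b)--(e) are misplaced: (b) and (c) are not $p$-groups, and all four have irreducible faithful $3$-dimensional representations (odd order rules out a $2$-dimensional irreducible summand, and for $(C_2\times C_2)\rtimes C_{3^k}$ the noncyclic derived group cannot embed in $\SL_2(\C)$), so none of them can arise in your Case (II); they belong entirely to Case (III). Second, in Case (III) the imprimitive irreducible subgroups of $\GL_3(\C)$ are monomial with transitive image in $S_3$, so the image can be $S_3$ and not just $C_3$; your analysis of ``$M\rtimes C_3$'' omits that branch (the paper dispatches it in Step 1 of the $[G,G]$-abelian theorem by noting $S_3$ admits a nontrivial character, but in your framework it must be handled directly). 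The remaining assertions --- that minimality applied to subgroups of $M$ ``reproduces (b), (c), (d), (e)'' and that the mixed products (f)--(j) ``follow from comparing $\rdim(A\times B)$ with $\rdim(A)+\rdim(B)$'' --- are statements of what must be proved, not proofs; the paper's Lemmas \ref{Lemmacyclic2Sylow} and \ref{2-group rdim2 lemma} and the case analysis on the $2$-rank of $[G,G]$ are where those facts are actually established.
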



\section{Preliminaries}

We write 
$D_{2n}$ for the dihedral group of order $2n$, 
$A_n$ for the alternating group of degree $n$, and $S_n$ for symmetric group of degree $n$. Let $G^1=[G, G]$ be the derived group of $G$, $G^2 =[G^1, G^1]$ for the derived group of $G^1$, and so on. The group $\GL_n(\C)$ is the group of $n\times n$ invertible matrices over the complex numbers, and its derived group $\SL_n(\C)$ is the subgroup of matrices with determinant 1. The group $PGL_n(\C)$ is the quotient of $\GL_n(\C)$ by its center. For a subset $S$ of a group $G$ we write $C_G(S)$ for the centralizer of $S$ in $G$ and $N_G(S)$ for its normalizer. We write $\Aut(G)$ for the automorphism group of a group $G$ and $Inn(G)\cong G/Z(G)$ for the group of inner automorphisms.

 
\begin{definition}
The group $G$ is {\it minimally faithful of degree} $n$ if $\rdim(G)=n$ and $\rdim(H)<n$ for all proper subgroups $H$ of $G$. 
\end{definition}

Our main results classify minimally faithful groups of degree 2 and 3. First we show that the case of abelian minimally faithful groups is straightforward. 

\begin{lemma} \label{abelian}
An abelian group $G$ is minimally faithful of degree $n$ if and only if it is an elementary abelian $p$-group of rank $n$, for some prime $p$. 
\end{lemma}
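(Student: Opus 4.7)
The plan is to reduce the statement to the classical fact that, for a finite abelian group $G$, one has $\rdim(G) = d(G)$, where $d(G)$ denotes the minimum number of generators of $G$. I would establish this formula at the outset: every irreducible complex representation of an abelian group is one-dimensional, so a faithful representation of dimension $n$ amounts to a choice of characters $\chi_1, \ldots, \chi_n \in \hat G$ whose kernels intersect trivially. By Pontryagin duality for finite abelian groups, $\bigcap_i \ker \chi_i = 1$ if and only if $\chi_1, \ldots, \chi_n$ generate $\hat G$; since $\hat G \cong G$ as abstract groups, this gives $\rdim(G) = d(\hat G) = d(G)$.

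With this in hand, the ``if'' direction is immediate: if $G \cong (\Z/p\Z)^n$, then $d(G) = n$, while every proper subgroup $H$ is an $\Fp$-subspace of strictly smaller dimension, so $d(H) < n$.

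For the converse I would argue by contradiction in two stages. Assume $G$ is abelian and minimally faithful of degree $n$. First, write $G = \prod_p G_p$ as the product of its Sylow subgroups and choose a prime $p_0$ realizing $d(G_{p_0}) = \max_p d(G_p) = d(G) = n$. If $G \neq G_{p_0}$, then $G_{p_0}$ is a proper subgroup with the same representation dimension as $G$, contradicting minimal faithfulness; hence $G$ must be a $p$-group. Second, write $G = \prod_{i=1}^n C_{p^{a_i}}$ with $a_1 \geq \cdots \geq a_n \geq 1$ and suppose some $a_i \geq 2$; then replacing the factor $C_{p^{a_1}}$ by its unique index-$p$ subgroup $C_{p^{a_1 - 1}}$ yields a proper subgroup $H$ that still decomposes as a product of $n$ nontrivial cyclic $p$-groups, so $d(H) = n$, again a contradiction. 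Therefore every $a_i = 1$, and $G$ is elementary abelian of rank $n$.

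I do not foresee a genuine obstacle here: the only non-routine input is the identification $\rdim(G) = d(G)$ for abelian $G$, which is standard, and everything else is a short contradiction argument using the primary decomposition and the structure of cyclic $p$-groups.
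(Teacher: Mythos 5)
Your proposal is correct and follows essentially the same route as the paper: both rest on the identity $\rdim(G)=d(G)$ (the number of invariant factors) for finite abelian $G$, and both conclude by exhibiting a proper subgroup with the same invariant when $G$ is not elementary abelian. The only cosmetic differences are that you justify $\rdim(G)=d(G)$ more explicitly via character duality, and you locate the offending proper subgroup by shrinking a cyclic factor rather than by passing directly to the subgroup $C_p^n\subset G$ as the paper does.
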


\begin{proof}
If $G = C_{n_1}\times \cdots \times C_{n_k}$ then $\rdim(G) \leq k$, with equality if and only if we have written $G$ with a minimal number of factors. Let the $p$-sylow subgroup of $G$ be written as $\prod\limits_{i=1}^{k_p} C_{p^{a_i}}$. The smallest number of factors in an expression for $G$ is $\max_p(k_p)$. If $p$ is such that $k_p = n$, then since $C_p^{n}\subset G$ and  $\rdim(C_p^n) = n= \rdim(G)$, the conclusion follows from minimal faithfulness. The converse is clear.  
\end{proof}


 We mention an easy and useful fact: if $G$ is a finite group and $n$ is coprime to $|Z(G)|$ then $\rdim(G\times C_n) = \rdim(G)$. This follows from the fact that any representation of $G$ can be extended to $G\times C_n$ by having $C_n$ act by scalars.

 Next, we recall a particular component of the classification of finite nonabelian groups containing only abelian proper subgroups.
 
 \begin{proposition}[\cite{MM03}]\label{AbelianSubgroupsReferenceProposition} If $G$ is a finite nonabelian group with only abelian proper subgroups then either $G$ is a $p$-group or else $G = Q\rtimes C_{p^a}$ where $Q$ is an elementary abelian $q$-group for a prime $q\neq p$, in which case we have $|Q|\equiv 1\pmod p$, and $Z(G)=C_{p^{a-1}}$. 
 \end{proposition}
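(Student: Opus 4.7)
The plan is to derive the structure by first reducing to Schmidt's classification of minimal non-nilpotent groups, and then extracting the remaining constraints from the stronger minimal non-abelian hypothesis. Suppose $G$ is not a $p$-group, so at least two primes divide $|G|$. Every proper subgroup of $G$ is abelian and hence nilpotent, so I first observe that $G$ itself must be non-nilpotent: were it nilpotent, it would decompose as the direct product of its Sylow subgroups, and each factor --- being proper, since there are at least two --- would be abelian, forcing $G$ to be abelian. Therefore $G$ is a minimal non-nilpotent group, and by Schmidt's theorem one obtains $G = Q \rtimes \langle \sigma \rangle$, where $Q$ is a normal Sylow $q$-subgroup and $\langle \sigma \rangle = C_{p^a}$ is a cyclic Sylow $p$-subgroup for distinct primes $p \neq q$. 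Since $Q$ is proper, it is abelian.

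Next I would show that $Q$ is elementary abelian. The torsion subgroup $Q[q] = \{x \in Q : x^q = 1\}$ is characteristic in $Q$ and thus normal in $G$. If $Q$ were not elementary abelian then $Q[q]$ would be a proper subgroup of $Q$, so $Q[q] \rtimes \langle \sigma \rangle$ would be a proper --- and hence abelian --- subgroup of $G$; this would force $\sigma$ to centralize $Q[q]$. However, the coprime action of $\langle \sigma \rangle$ on the abelian $q$-group $Q$ yields a Fitting-style decomposition $Q = C_Q(\sigma) \oplus [Q,\sigma]$, and triviality on $Q[q]$ forces $[Q,\sigma] \cap Q[q] \subseteq [Q,\sigma] \cap C_Q(\sigma) = 1$; since a nontrivial $q$-group has nontrivial $q$-torsion, $[Q,\sigma] = 1$, so $G$ would be abelian --- a contradiction.

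Finally I would identify $Z(G)$ and derive the congruence on $|Q|$. The subgroup $Q \rtimes \langle \sigma^p \rangle$ has index $p$, is proper, and so is abelian, placing $\sigma^p$ in $Z(G)$ and showing that the action of $\langle \sigma \rangle$ on $Q$ factors through $C_p$. If $C_Q(\sigma)$ were nontrivial, then $[Q, \sigma] \rtimes \langle \sigma \rangle$ would be a proper subgroup on which $\sigma$ still acts nontrivially, hence nonabelian --- again a contradiction. Thus $\sigma$ fixes no nonidentity element of $Q$, so $Z(G) \cap Q = 1$, $Z(G) = \langle \sigma^p \rangle \cong C_{p^{a-1}}$, and orbit counting for the fixed-point-free action of $C_p$ on $Q \setminus \{1\}$ yields $|Q| \equiv 1 \pmod p$.

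The main obstacle is invoking Schmidt's theorem: everything afterward is routine manipulation with coprime actions and characteristic subgroups, but the passage from ``every proper subgroup is abelian, $G$ non-nilpotent'' to a normal Sylow $q$-subgroup with cyclic Sylow $p$-complement is the classical nontrivial input, originally due to Miller and Moreno. If one wished to avoid the appeal, one could instead argue directly that a maximal subgroup $M$ of $G$ is abelian of prime index and normal (using that $M \cap M^g$ is centralized by $\langle M, g \rangle = G$ and must be trivial when $M$ is self-normalizing), and then iterate to produce the normal Sylow complement.
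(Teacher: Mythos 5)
Your argument is correct, but the comparison here is a little unusual: the paper offers no proof of this proposition at all --- it is stated as a quoted result of Miller and Moreno \cite{MM03} and simply cited. What you have produced is therefore a genuine addition rather than a parallel of the paper's reasoning. Your route goes through Schmidt's theorem on minimal non-nilpotent groups (every proper subgroup abelian implies every proper subgroup nilpotent, and non-nilpotency of $G$ follows since a nilpotent group with at least two nontrivial Sylow factors would be a product of abelian proper subgroups), and then extracts the finer structure --- $Q$ elementary abelian via the characteristic subgroup $Q[q]$ and the coprime decomposition $Q = C_Q(\sigma)\times[Q,\sigma]$; $C_Q(\sigma)=1$ via the properness of $[Q,\sigma]\rtimes\langle\sigma\rangle$; $Z(G)=\langle\sigma^p\rangle$; and $|Q|\equiv 1\pmod p$ from the fixed-point-free action of $C_p$ on $Q\setminus\{1\}$. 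All of these steps check out (the passage from $Z(G)\cap Q=1$ to $Z(G)=\langle\sigma^p\rangle$ is compressed --- one should note that $Z(G)$ is then a normal $p$-group, hence lies in every Sylow $p$-subgroup, hence in $\langle\sigma\rangle$ but not containing $\sigma$ --- but this is routine). The one historical wrinkle is that Schmidt's theorem (1924) postdates Miller--Moreno (1903), so your proof is not the original one; the alternative you sketch at the end, working directly with a self-normalizing maximal subgroup and the triviality of $M\cap M^g$, is closer in spirit to the classical argument and would make the proof genuinely self-contained. Either way, what your approach buys is a complete derivation where the paper relies on an external (and somewhat old and hard-to-access) reference.
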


\section{Minimally Faithful of Degree 2}

\begin{proposition} \label{Degree 2}
Suppose $G$ is a nonabelian group and is minimally faithful of degree $2$. Then either $G=Q_8$ or $G = \langle a, b| \ a^p = b^{2^m} = bab^{-1}a = 1 \rangle = C_p\rtimes C_{2^m}$ for some odd prime $p$ and some $m\geq 1$. 
\end{proposition}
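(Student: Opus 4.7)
The plan is to use the minimality hypothesis to force every proper subgroup of $G$ to be cyclic, then apply Proposition \ref{AbelianSubgroupsReferenceProposition} to split into two cases, and finally invoke the classification of finite subgroups of $\PGL_2(\C)$ to pin down the relevant prime. Since $\rdim(G) = 2$ and minimality forces $\rdim(H) \leq 1$ for every proper $H \leq G$, each such $H$ is cyclic; in particular all proper subgroups are abelian, so $G$ falls into one of the two families produced by Proposition \ref{AbelianSubgroupsReferenceProposition}.

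In the $p$-group case, I would first show that $G$ has a unique subgroup of order $p$. Picking a central element $z$ of order $p$ (available since $Z(G) \neq 1$), any second element $h$ of order $p$ outside $\langle z \rangle$ generates with $z$ a subgroup isomorphic to $C_p \times C_p$, which is proper in $G$ (as $|G| \geq p^3$, $G$ being non-abelian) but not cyclic. By the classical result that a finite $p$-group with a unique subgroup of order $p$ is either cyclic or (when $p = 2$) a generalized quaternion group $Q_{2^n}$ with $n \geq 3$, and because $Q_{2^n}$ with $n \geq 4$ contains the non-cyclic proper subgroup $Q_{2^{n-1}}$, the only non-abelian option satisfying our hypotheses is $G = Q_8$.

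In the semidirect-product case $G = Q \rtimes C_{p^a}$, the elementary-abelian $q$-group $Q$ is a proper subgroup, so it is cyclic, giving $Q = C_q$. To force $p = 2$, I would use that $G$ is non-abelian with $\rdim(G) = 2$, so any faithful $\rho: G \hookrightarrow \GL_2(\C)$ must be irreducible (otherwise $\rho(G)$ would be abelian, forcing $G$ itself to be abelian). Then $\rho(Z(G))$ lies in the scalars by Schur's lemma, so $G/Z(G)$ embeds into $\PGL_2(\C)$. Since $|G/Z(G)| = pq$ and $G/Z(G)$ is non-cyclic, the classification of finite subgroups of $\PGL_2(\C)$ (cyclic, dihedral, $A_4$, $S_4$, $A_5$) leaves only $G/Z(G) \cong D_{2q}$ with $2 \in \{p, q\}$; since $q = 2$ would trivialize the action (as $\Aut(C_2) = 1$, forcing $G$ abelian), we obtain $p = 2$. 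The action of $C_{2^a}$ on $C_q$ then has kernel $Z(G) = C_{2^{a-1}}$ and therefore factors through the unique non-trivial action of $C_2$ on $C_q$, namely inversion, yielding the stated presentation. The main obstacle is this $\PGL_2(\C)$ step: without it one cannot rule out candidates such as $C_7 \rtimes C_3$, which have every proper subgroup cyclic but representation dimension exceeding $2$.
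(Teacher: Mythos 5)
Your proof is correct, but it follows a genuinely different route from the paper's. You begin by observing that minimality forces every proper subgroup to be cyclic and then invoke the Miller--Moreno classification (Proposition \ref{AbelianSubgroupsReferenceProposition}) to split into the $p$-group case and the $C_q\rtimes C_{p^a}$ case; you settle the former with the theorem that a $p$-group with a unique subgroup of order $p$ is cyclic or generalized quaternion, and the latter by embedding $G/Z(G)$ (of order $pq$, noncyclic) into $\PGL_2(\C)$ and reading off that it must be dihedral, which forces $p=2$. The paper instead works more directly with the representation: irreducibility of the faithful $2$-dimensional representation forces $|G|$ to be even, a cyclic $2$-Sylow subgroup yields a normal $2$-complement $K$ (necessarily cyclic) via the transfer-theoretic normal $2$-complement theorem, and the order-$2$ action of a Sylow generator on $K$ plus minimality pins down $C_p\rtimes C_{2^m}$; the $2$-group case is handled by the classification of $2$-groups with a cyclic maximal subgroup. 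Your version outsources more work to standard structure theorems (Miller--Moreno, the unique-subgroup-of-order-$p$ theorem, the finite subgroups of $\PGL_2(\C)$) and in exchange avoids transfer theory entirely; the paper's version is more self-contained and, as a byproduct, exhibits the faithful $2$-dimensional representations explicitly (an induced character), which your argument does not need since the proposition only asserts one direction. One small simplification available to you: rather than the $\PGL_2(\C)$ classification, the fact that the degree of an irreducible representation divides $|G|$ already shows $2\mid |G|$ and hence $p=2$ in the semidirect-product case, which is essentially the paper's opening move and suffices to exclude examples like $C_7\rtimes C_3$.
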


\begin{proof}
A faithful representation $G\to \GL_2(\C)$ must be irreducible, since otherwise it is a sum of two 1-dimensional representations, which would imply that $G$ is abelian. Thus $G$ has even order. Let $P$ be a 2-Sylow subgroup of $G$. 

Suppose $G$ is not a 2-group. Then $\rdim(P)=1$ so $P$ is cyclic. By Cayley's normal 2-complement theorem, $G=K\rtimes P$ for some subgroup $K$ of odd order. Since $\rdim(K)=1$, $K$ is cyclic. Let $x\in P$ be a generator. If $x$ centralized $K$ then $G$ would be abelian, and if $x^2$ did not centralize $K$ then the proper subgroup generated by $K$ and $x^2$ would not be cyclic. Therefore $x$ acts on $K$ by an automorphism of order 2. We may decompose $K = K^+\times K^-$ where $xkx^{-1}= k^{\pm 1}$ for all $k\in K^{\pm}$. Since $G$ is nonabelian, $K^-$ is nontrivial. If $h\in K^-$ has prime order $p$ then $x$ and $h$ generate a nonabelian group, which must equal $G$. Therefore $G = C_p \rtimes C_{2^m}$ as claimed. These groups can be realized inside $\GL_2(\C)$: let $H$ be the cyclic group of order $2^{m-1}p$ generated by $x^2$ and $h$, take any faithful character $\chi:H\to \C^\times$, and then $\Ind_H^G(\chi)$ is a faithful 2-dimensional representation. 

Finally, suppose $G$ is a 2-group. Then $G$ has a cyclic maximal (index-2) subgroup. The $p$-groups with cyclic maximal subgroups have been classified; if $G\neq Q_8$ then one readily sees that $G$ contains $C_2\times C_2$, and if $G=Q_8$ then it is easy to check that $G$ is minimally faithful of degree 2. 
\end{proof}

\section{Degree 3}

We now begin the classification of minimally faithful groups of degree 3, first proving some easy elementary results. 



\begin{lemma}
Suppose $G$ is a nonabelian group. 

a) If $G$ has odd order or $Z(G)$ is not cyclic then $\rdim(G)\geq 3$. 

b) If $\rdim(G) = 3$ and $G$ has odd order then $3$ divides $|G|$ and $Z(G)$ is cyclic. 
\end{lemma}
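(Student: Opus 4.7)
The plan for part (a) is to combine Schur's lemma with the classical list of finite subgroups of $PGL_2(\C)$. Suppose $\rdim(G)\leq 2$ and let $\rho\colon G \hookrightarrow \GL_2(\C)$ be a faithful representation. Since $G$ is nonabelian, $\rho$ must be irreducible, so by Schur $\rho(Z(G))$ consists of scalar matrices, forcing $Z(G)$ to be a finite cyclic subgroup of $\C^\times$. Composing $\rho$ with the quotient $\GL_2(\C)\to PGL_2(\C)\cong SO(3)$ yields an embedding of $G/Z(G)$, which is noncyclic because $G$ is nonabelian. The noncyclic finite subgroups of $SO(3)$ are the dihedral groups together with $A_4, S_4, A_5$, all of even order; hence $|G|$ is even. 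Contraposition gives both halves of (a).

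For part (b) the plan is first to show that every faithful $3$-dimensional representation of $G$ is irreducible, and then to read off the two conclusions from elementary character theory. Let $\rho\colon G \hookrightarrow \GL_3(\C)$ be faithful. A decomposition of $\rho$ into three $1$-dimensional pieces would force $G$ abelian, so the only remaining possibility for a proper decomposition is $\rho=\rho_1\oplus\rho_2$ with $\dim\rho_1=1$ and $\dim\rho_2=2$. Since $\rho_2(G)$ has odd order and embeds in $\GL_2(\C)$, part (a) forces $\rho_2(G)$ to be abelian, so $[G,G]\subseteq \ker\rho_2$. Combined with $[G,G]\subseteq \ker\rho_1$ (automatic for a $1$-dimensional representation), this yields $[G,G]\subseteq \ker\rho=1$, contradicting the nonabelian hypothesis.

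Having shown $\rho$ irreducible, Schur's lemma again forces $Z(G)$ into the scalars, hence to be cyclic; and the classical theorem that the dimension of an irreducible complex representation divides $|G|$ gives $3\mid |G|$. The only substantive step is the reduction to the irreducible case in part (b); once part (a) is available to kill the $2$-dimensional summand, the remaining conclusions are immediate from Schur and Frobenius.
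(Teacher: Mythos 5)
Your proof is correct. Part (b) is essentially the paper's argument: rule out the $1\oplus1\oplus1$ and $1\oplus2$ decompositions, then apply Schur's lemma and degree divisibility to the resulting irreducible representation. Part (a) takes a genuinely different route for the odd-order half. The paper argues in the contrapositive direction: any faithful $2$-dimensional representation of such a $G$ is forced to be reducible (for noncyclic center by Schur; for odd order because the degree of an irreducible constituent would have to divide $|G|$), hence is a sum of characters, killing $[G,G]$ and contradicting faithfulness. You instead fix an irreducible faithful $\rho$, push $G/Z(G)$ into $PGL_2(\C)$, and quote the classification of its noncyclic finite subgroups (dihedral, $A_4$, $S_4$, $A_5$) to conclude that $|G|$ is even. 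Both work; your classification step is heavier machinery than needed, since the same divisibility theorem you invoke in (b) already gives $2\mid|G|$ directly from the existence of a faithful $2$-dimensional irreducible. One small imprecision: $PGL_2(\C)$ is not isomorphic to the compact group $SO(3)$ (it is $SO_3(\C)$); what you actually need is that every finite subgroup of $\GL_2(\C)$ is conjugate into $U(2)$, so finite subgroups of $PGL_2(\C)$ are, up to conjugacy, finite subgroups of $PU(2)\cong SO(3)$. This does not affect the conclusion, and the paper itself uses the same list of finite subgroups of $PGL_2(\C)$ in a later argument.
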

\begin{proof}
a) Since $G$ is nonabelian, $\rdim(G)\geq 2$. If $G$ has odd order or $Z(G)$ is not cyclic then a faithful representation $\rho:G\to \GL_2(\C)$ would be reducible (the latter case because of Schur's lemma), hence would factor as a direct sum of two 1-dimensional representations, so $1\neq [G, G]\subset \ker(\rho)$, a contradiction.

b) If $\rdim(G)=3$ then a faithful representation $G\to \GL_3(\C)$ must be irreducible: it cannot have a 2-dimensional irreducible summand since $|G|$ is odd and cannot be a sum of 1-dimensional representations since $G$ is nonabelian. Thus $3$ divides $|G|$, and by Schur's lemma $Z(G)$ is cyclic. 
\end{proof}

\begin{cor}
Suppose $G$ is a nonabelian group and is minimally faithful of degree 3. Let $H$ be a proper subgroup of $G$.

a) If $H$ is nonabelian then $Z(H)$ is cyclic. 

b) If $H$ has odd order then $H$ is abelian. In particular, if $G$ has odd order then $H$ is abelian.

c) If $H$ is abelian then $H$ has at most 2 invariant factors. 

d) The order of $G$ is divisible by $2$ or $3$ (or both). 
\end{cor}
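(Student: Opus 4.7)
The plan is to derive each of the four statements directly from the preceding lemma, combined with the defining property of minimal faithfulness, which guarantees $\rdim(H)\le 2$ for every proper subgroup $H\le G$.

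For (a), suppose $H$ is a proper nonabelian subgroup with $Z(H)$ noncyclic. Then part (a) of the preceding lemma, applied to $H$, yields $\rdim(H)\ge 3$, contradicting $\rdim(H)\le 2$. For (b), the same lemma part says that any nonabelian group of odd order has representation dimension at least $3$, so a proper subgroup of odd order cannot be nonabelian. The \emph{in particular} assertion is then immediate, since every subgroup of an odd-order group has odd order.

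For (c), I would recall from the proof of Lemma \ref{abelian} that for a finite abelian group $H$, the representation dimension equals the number of factors in its invariant factor decomposition (equivalently, $\max_p k_p$ where $k_p$ is the rank of the $p$-Sylow). Hence $\rdim(H)\le 2$ forces $H$ to have at most two invariant factors. For (d), if $|G|$ were coprime to $6$ then $|G|$ would in particular be odd, and part (b) of the preceding lemma applied to $G$ itself would give $3\mid |G|$, a contradiction.

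I do not expect a genuine obstacle: the entire corollary is straightforward bookkeeping applied to the preceding lemma in the presence of minimal faithfulness. The only place requiring mild care is (c), where one needs to explicitly invoke the identification $\rdim(H)=\#\{\text{invariant factors of }H\}$ extracted from the proof of Lemma \ref{abelian}.
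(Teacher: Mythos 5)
Your proposal is correct and follows essentially the same route as the paper: parts (a)--(c) are exactly the ``immediate from $\rdim(H)\le 2$'' observations the paper intends (filled in via the preceding lemma and the invariant-factor count from Lemma \ref{abelian}), and your argument for (d) via part (b) of the preceding lemma is just a repackaging of the paper's direct observation that a faithful degree-$3$ representation of a group of order coprime to $6$ would have to split into three $1$-dimensional summands.
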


\begin{proof}
The first three parts are immediate since $\rdim(H)\leq 2$. For the last, observe that if $G$ has order coprime to 6 then any representation $G\to \GL_3(\C)$ must be a direct sum of three 1-dimensional representations. 
\end{proof}



\begin{lemma} \label{evenorderreducibility}
Suppose that $G$ is minimally faithful of degree 3. If $Z(G)\cap [G, G]$ has even order then any faithful representation $\rho:G\to \GL_3(\C)$ is reducible and $[G, G]$ is isomorphic to a subgroup of $\SL_2(\C)$.
\end{lemma}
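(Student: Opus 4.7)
The plan is to argue by contradiction for the reducibility and then extract the $\SL_2$-embedding from the only remaining decomposition shape.

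First I would assume $\rho:G\to\GL_3(\C)$ is irreducible and apply Schur's lemma: every element of $Z(G)$ must then act by a scalar matrix. In particular, if $z\in Z(G)\cap[G,G]$, then $\rho(z)=\lambda I_3$ for some $\lambda\in\C^\times$. Because $z$ lies in $[G,G]$ and commutators have determinant $1$, we get $\det\rho(z)=\lambda^{3}=1$, so $\lambda$ is a cube root of unity. The hypothesis that $|Z(G)\cap[G,G]|$ is even lets me invoke Cauchy's theorem to choose $z$ of order $2$; then also $\lambda^{2}=1$, and combining $\lambda^{2}=\lambda^{3}=1$ forces $\lambda=1$. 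Thus $\rho(z)=I_3$, contradicting faithfulness of $\rho$. This shows $\rho$ must be reducible.

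Next I would analyze the reducible decomposition. Since $|Z(G)\cap[G,G]|$ is even, in particular $[G,G]\neq 1$, so $G$ is nonabelian. Any decomposition of $\rho$ into three $1$-dimensional summands would have $\rho$ factoring through the abelianization $G/[G,G]$, contradicting faithfulness. Therefore the only possibility is $\rho\cong\chi\oplus\sigma$ with $\chi$ a $1$-dimensional character of $G$ and $\sigma$ an irreducible $2$-dimensional representation of $G$.

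Finally I would restrict to $[G,G]$. Since any $1$-dimensional character of $G$ is trivial on $[G,G]$, we have $\chi|_{[G,G]}=1$, so the faithfulness of $\rho|_{[G,G]}$ is carried entirely by $\sigma|_{[G,G]}$, giving an embedding $[G,G]\hookrightarrow\GL_2(\C)$. Moreover $\det\sigma$ is again a $1$-dimensional character of $G$, hence trivial on $[G,G]$, so the image lies in $\SL_2(\C)$, as desired.

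I do not expect any serious obstacle here; the only subtlety is keeping track of why the $\chi\oplus\sigma$ decomposition is forced (ruling out the $1{+}1{+}1$ shape uses the nontriviality of $[G,G]$, which in turn uses that $Z(G)\cap[G,G]$ is nontrivial, i.e.\ that its order being even is a strictly stronger condition than being odd and hence nonzero).
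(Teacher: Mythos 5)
Your proof is correct and follows essentially the same route as the paper: both use Schur's lemma to show an order-2 central commutator would have to act as a scalar of determinant 1 in an irreducible representation (impossible), and both then locate $\rho([G,G])$ inside the derived group of $\GL_2(\C)\times\GL_1(\C)$, i.e.\ $\SL_2(\C)\times\{1\}$. Your extra care in ruling out the $1{+}1{+}1$ decomposition is a harmless refinement; the paper's version does not need it since diagonal matrices already sit inside $\GL_2(\C)\times\GL_1(\C)$.
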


\begin{proof}
Suppose $z\in Z(G)\cap [G, G]$ has order 2. If $\rho$ is irreducible then $\rho(z)=-I_3$ by Schur's lemma. On the other hand, $\rho([G, G])\subset \SL_3(\C)$ and $-I_3\not\in \SL_3(\C)$. So $\rho$ is reducible, $G\cong \rho(G)\subset \GL_2(\C)\times \GL_1(\C)$, and $\rho([G, G])$ is contained in the derived group of $\GL_2(\C)\times \GL_1(\C)$, which is $\SL_2(\C)\times \{1\}$. 
\end{proof}

\begin{theorem}
Let $G$ be minimally faithful of degree 3. Then $[G, G]$ is abelian. 
\end{theorem}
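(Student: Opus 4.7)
Plan: The proof proceeds by contradiction, assuming $N := [G,G]$ is nonabelian. The argument splits on whether $N = G$ (the perfect case) and, when $N$ is a proper subgroup, on the reducibility of a faithful representation $\rho \colon G \to \GL_3(\C)$.

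If $N = G$ then $G$ is perfect, so $\rho$ has image in $\SL_3(\C)$, and $\rdim(G) = 3$ forces $\rho$ to be irreducible: any $1$-dimensional summand would be trivial (since $G$ is perfect), giving a faithful $2$-dimensional representation, contradiction. Then $G$ is a finite perfect irreducible subgroup of $\SL_3(\C)$, which by Blichfeldt's classification must be $A_5$, $\PSL_2(\mathbb{F}_7)$, or $3.A_6$; each contains a proper subgroup of representation dimension $3$ (respectively $A_4$, $S_4$, and $3.A_5$), contradicting minimal faithfulness.

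If $N$ is proper, then $\rdim(N) = 2$ (since $N$ is nonabelian), so $N$ is irreducibly embedded in $\GL_2(\C)$. In the irreducible case for $\rho$, Clifford's theorem forces $\rho|_N$ to be an irreducible $3$-dimensional representation of $N$ (the alternative Clifford decompositions make $N$ abelian), so Schur applied to $\rho|_N \subset \SL_3(\C)$ gives $Z(N) \hookrightarrow \mu_3$. Combining with $N/Z(N) \hookrightarrow PGL_2(\C)$ from the faithful $2$-dimensional representation of $N$, one has $N/Z(N) \in \{C_n, V_4, D_{2n}, A_4, S_4, A_5\}$. Since $H^2(G_0, C_3) = 0$ for each such quotient $G_0$, every central extension by $C_3$ is a direct product, so $N$ is forced to be one of a short list of groups. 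Each fails to simultaneously possess both a faithful $2$-dimensional representation (required by $\rdim(N) = 2$) and a $3$-dimensional irreducible representation (required by $\rho|_N$): dihedral-type $N$ lack the $3$-dimensional irreducible; $A_4$, $S_4$, $A_5$, and their products with $C_3$ lack the faithful $2$-dimensional one. Contradiction.

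In the reducible case $\rho = \sigma \oplus \chi$, $N \hookrightarrow \SL_2(\C)$ via $\sigma$, so $N \in \{Q_{4n}(n \geq 2),\, 2T,\, 2O,\, 2I\}$; each has $Z(N) = C_2$, which is central in $G$ since $\Aut(C_2) = 1$. The kernel $K := \ker \sigma$ is cyclic and central (embedding into $\C^\times$ via $\chi$), disjoint from $N$, and nontrivial (else $\rdim(G) \leq 2$). If $|K|$ is even, picking an involution $t \in K$ and forming $H := \langle t \rangle \times N$ gives $\rdim(H) = 3$ by a direct check: the only candidate irreducible $2$-dimensional representations $\chi' \otimes \sigma|_N$ fail faithfulness since the scalar $-I = \sigma(z)$ (with $z$ the generator of $Z(N)$) forces $\chi'(t) = -1$, killing faithfulness on $\langle t \rangle$, while sums of $1$-dimensional representations are not faithful on $N$. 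If $H \subsetneq G$ this contradicts minimal faithfulness; if $H = G$ then $N = [N, N]$ forces $N = 2I$ and $G = C_2 \times 2I$, which itself has the proper subgroup $C_2 \times Q_8$ of representation dimension $3$. If $|K|$ is odd, one extends a faithful character of $K$ to a character $\chi'$ of $G$ and verifies $\sigma \otimes \chi'$ is a faithful $2$-dimensional representation of $G$ using $-1 \notin \mu_{|K|}$ together with the fact that the preimage of the scalars in $\sigma(G)$ coincides with $Z(G)$ (any $g$ with $\sigma(g)$ scalar satisfies $[g,G] \subset N \cap K = 1$, hence is central), yielding $\rdim(G) \leq 2$, the final contradiction. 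The main obstacle is this last subcase: the faithfulness argument for $\sigma \otimes \chi'$ requires $Z(G)$ to be cyclic so that it admits a faithful character, and if $Z(G)$ has two invariant factors the proof must instead locate an abelian proper subgroup of $G$ with three invariant factors, contradicting minimal faithfulness via the earlier corollary on abelian subgroups.
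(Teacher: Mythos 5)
Your route is genuinely different from the paper's (which never invokes Clifford theory or group cohomology, instead reducing everything to deciding which subgroups of $\SL_2(\C)$ can occur as $[G^1,G^1]$ and as $G^1$), and much of it is sound in outline, but there are two concrete gaps. First, the assertion that $H^2(G_0,C_3)=0$ for every finite subgroup $G_0$ of $PGL_2(\C)$ is false for $G_0=A_4$: since $H_1(A_4)\cong C_3$, universal coefficients gives $\mathrm{Ext}(C_3,C_3)\cong C_3\subset H^2(A_4,C_3)$, realized by the non-split central extension $N=(C_2\times C_2)\rtimes C_9$, which has $Z(N)\cong C_3$ and $N/Z(N)\cong A_4$. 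Your conclusion survives because this group, like $C_3\times A_4$, has only $1$- and $3$-dimensional irreducible representations and hence no faithful $2$-dimensional one, but the cohomological step as stated is wrong and this extra candidate must be checked by hand.

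Second, and more seriously, the subcase you yourself flag as the main obstacle ($\rho$ reducible, $|K|$ odd, $Z(G)$ noncyclic) is not closed by your proposed fix. Since $Z(G)$ embeds in $\C^\times\times\C^\times$ via $(\sigma,\chi)$ and $\ker(\sigma|_{Z(G)})=K$ has odd order, a noncyclic $Z(G)$ must contain $C_q\times C_q$ for an \emph{odd} prime $q$; then $\langle z\rangle\times C_q\times C_q\cong C_q\times C_{2q}$ has only two invariant factors, and there is no reason for $G$ to contain any abelian subgroup with three invariant factors. The case must be closed differently, e.g.: $Z(G)N$ is nonabelian with noncyclic center, hence equals $G$ by minimality, so $N=[G,G]=[N,N]$ is perfect and $N$ is the binary icosahedral group; then $Z(G)Q_8$ (for $Q_8$ a $2$-Sylow subgroup of $N$) is a \emph{proper} nonabelian subgroup with noncyclic center, contradicting minimality. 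Relatedly, in the $Z(G)$-cyclic branch your recipe of extending a faithful character of $K$ to $\chi'$ does not by itself make $\sigma\otimes\chi'$ faithful: its kernel is $\{g\in Z(G):\lambda(g)\chi'(g)=1\}$ where $\sigma(g)=\lambda(g)I_2$ on $Z(G)$, and your verification only controls this on $K\langle z\rangle$, not on all of $Z(G)$ (for example $Z(G)\cong C_{30}$ with $|K|=3$ admits bad extensions whose kernel is $C_5$). One must choose the extension so that $\lambda\chi'$ is faithful on the whole cyclic group $Z(G)$, which is possible but requires an additional argument.
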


\begin{proof}
We may assume $G$ is nonabelian. Let $G^1:=[G, G]$. 

{\bf STEP 1}: $G\neq G^1$.  

Suppose $G = G^1$. Then any faithful representation $\rho:G\to \GL_3(\C)$ has image in $\SL_3(\C)$. We refer to the classification of finite subgroups of $\SL_3(\C)$, up to $\GL_3(\C)$-conjugacy, found in \cite{SL3C}. If $\rho$ is reducible then (up to conjugacy) $\rho(G)$ is contained in the image of $g\mapsto (g, \det(g)^{-1}):\GL_2(\C)\to (\GL_2(\C)\times \GL_1(\C) )\cap \SL_3(\C)$, which would imply $\rdim(G)=2$. Therefore $\rho$ is irreducible. Among the subgroups of $\SL_3(\C)$ that act irreducibly and primitively on $\C^3$, the groups $A_5$, $PSL_2(7)$, and $3A_6$ (the nontrivial triple cover of $A_6$) all contain $A_4$, while the others all have a nonabelian 3-Sylow subgroup (and are not themselves 3-groups). Since $\rdim(A_4)=3$, none of these groups are minimally faithful of degree 3. The same issue occurs if $G = H \cdot Z(\SL_3(\C))$ where $H = A_5$ or $PSL_2(7)$. Therefore $G$ acts irreducibly and imprimitively on $\C^3$. 

Thus $\rho(G)$ is generated by a finite subgroup $D$ of the diagonal matrices and a cyclic permutation $p=\begin{bmatrix}0 & 1 & 0\\ 0 & 0 & 1 \\1 & 0 & 0 \end{bmatrix}$, or else is generated by such a group together with a matrix $Q=\begin{bmatrix}0 & a & 0\\ b & 0 & 0 \\0 & 0 & c \end{bmatrix}$ with $abc=-1$. In the both cases $D$ is a normal subgroup of $\rho(G)$. In the first case $\rho(G)/D= C_3$, so $G\neq G^1$. In the second case, since $Q^2\in D$ and $QpQ^{-1}\in Dp^{-1}$, we have $\rho(G)/D = S_3$, which admits a nontrivial character, so $G\neq G^1$.

{\bf STEP 2}: $G$ is solvable.

If $G$ is not solvable then $G^1$ is not solvable. Since $G^1\neq G$ we can regard $G^1$ as a subgroup of $\GL_2(\C)$. Then $G^2 := [G^1, G^1]\subset \SL_2(\C)$ is not solvable. The unique nonsolvable subgroup of $\SL_2(\C)$ is the binary icosahedral group, which we will denote by $BI_{120}$. Thus $Z(G^2) \cong Z(BI_{120})$ has order 2 which implies $Z(G^2)\subset Z(G)\cap [G, G]$. By lemma \ref{evenorderreducibility}, $\rho(G)$ may be regarded as a subgroup of $\GL_2(\C)\times \GL_1(\C)$ and $\rho(G^1)\subset \SL_2(\C)\times \{1\}$ so $G^1 \cong BI_{120}$. If $(M, t)\in \rho(G\setminus G^1)$ where $M\in \GL_2(\C)$, $t\in \C^\times$, and $(g, 1)\in \rho(G^1)$, then $(MgM^{-1}, 1)\in \rho(G^1)$. so $(M, 1)$ and $\rho(G^1)$ generate a finite subgroup of $\GL_2(\C)\times\{1\}$ which contains a copy of $BI_{120}$. The only such groups are generated by $BI_{120}$ and a subgroup of scalar matrices, by \cite{GL2C}. 

Therefore, since $(M, t)\not\in \rho(G^1)$, $(M, t)$ commutes with $\rho(G^1)$, which implies $M$ is a scalar matrix by Schur's lemma. So $G = Z(G)G^1$ with $Z(G)\cap G^1$ of order 2. If $Z(G)$ is cyclic then $\rdim(G)=2$ since we can let $Z(G)$ act by scalars on $\C^2$ in a realization of $BI_{120}\subset \SL_2(\C)$. Suppose $Z(G)$ is not cyclic. Since $Q_8$ is the 2-Sylow subgroup of $BI_{120}$, we have the nonabelian subgroup $Q_8Z(G)$ with noncyclic center. Thus $\rdim(Q_8Z(G))=3$. But $Q_8Z(G)\neq G$ since $Q_8$ is not centralized by $BI_{120}$, contradicting minimality of $G$.

{\bf STEP 3}: If $G^1$ is not abelian then $G^1 = Q_8$ or $G^1 = Q_8\rtimes C_3$.

Suppose $G^1$ is nonabelian. Note $G^2=[G^1, G^1] \subset \SL_2(\C)$. The finite subgroups of $\SL_2(\C)$ are $C_n$, $BD_{4n} := \langle A, B: A^n = B^2, \ B^4 = 1 \ BAB^{-1} = A^{-1} \rangle$ (dicyclic or binary dihedral groups of order $4n$, $n\geq 2$), $Q_8\rtimes C_3$ (the binary tetrahedral group), $BO_{48}$ (the binary octahedral group), and $BI_{120}$ (the binary icosahedral group). Their derived groups are, respectively, $\{1\}$, $C_{2n}$ (respectively, $C_{n}$) if $n$ is odd (respectively, even), $Q_8$, $Q_8\rtimes C_3$, and $BI_{120}$. The finite subgroups of $PGL_2(\C)$ are $C_n$, $D_{2n}$, $A_4$, $S_4$, and $A_5$. In particular, $G^1/Z(G^1)$ and $G^2/(Z(G^2)\cap \{\pm I_2\}) = [G^1/Z(G^1), G^1/Z(G^1)]$ are such subgroups of $PGL_2(\C)$. If $Z(G^2)$ has a unique element $z$ of order 2 then $z\in Z(G)\cap [G, G]$ and lemma \ref{evenorderreducibility} applies. This holds except when $G^2$ is cyclic of odd order. 


Case 1: Since $G$ is solvable, $G^2\neq BI_{120}$. 

Case 2: If $G^2 = BO_{48}$ then $G^2/Z(G^2)\cong S_4$, so $G^1/Z(G^1)\subset PGL_2(\C)$ must contain $S_4$, hence equal $S_4$. Thus $G^2/Z(G^2)\subset [S_4, S_4] = A_4$, a contradiction.  

Case 3: If $G^2 = Q_8\rtimes C_3$ then $G^1\subset \SL_2(\C)$ equals $BO_{48}$, the only subgroup of $\SL_2(\C)$ with the correct commutator subgroup. We claim that $BO_{48}$ cannot be realized as a commutator subgroup of any group, hence cannot equal $G^1$. We have $Inn(BO_{48}) =BO_{48}/(\pm I_2) = S_4$ and $\Aut(BO_{48}) = S_4\times C_2$.  
But then we cannot have $Inn(BO_{48})\subset \Aut(BO_{48})^1 = A_4$, which needs to occur since the image of the $G^1$ under the natural map $G\to \Aut(G^1)$ equals $Inn(G^1)$ and lies in $\Aut(G^1)^1$.

Case 4: If $G^2 = BD_{4n}$ with $n>2$ then $G^1\subset \SL_2(\C)$ cannot exist because $BD_{4n}$ is not the commutator subgroup of any finite subgroup of $\SL_2(\C)$. If $n=2$ then $G^2 = Q_8$ and $G^1 = Q_8\rtimes C_3$, since the binary tetrahedral group is the only subgroup of $\SL_2(\C)$ with the correct commutator subgroup.   

Case 5i: If $G^2=C_n$ is cyclic of even order $n>1$ then $G^1\subset \SL_2(\C)$ and has derived group $C_n$. Therefore $G^1 = BD_{4n}$. Realizing $G$ as a subgroup of $\GL_2(\C)\times \GL_1(\C)$ (using lemma \ref{evenorderreducibility}), let $H$ be the subgroup of $\GL_2(\C)$ generated by the first components of elements of $G$. Clearly $H^1 = G^1$. If $n\geq 4$ then $H/Z(H)\subset PGL_2(\C)$ cannot have the dihedral group $D_{2n}$ as its derived group. Since $G^1/Z(G^1)=D_{2n}$ this is a contradiction. If $n=2$, then $G^1 = Q_8$ since this is the unique subgroup of $\SL_2(\C)$ with commutator subgroup of order 2.

Case 5ii: Now assume $G^2=C_n$ is cyclic of odd order $n>1$. Since $G^1$ is a nonabelian subgroup of $G$, $Z(G^1)$ is cyclic. Suppose $Z(G^1)$ has even order. Then its unique element of order 2 will lie in $Z(G)$, so lemma \ref{evenorderreducibility} applies and $G^1\subset \SL_2(\C)$. The only subgroups of $\SL_2(\C)$ with cyclic derived group are $BD_{4k}$, but $BD_{4k}^1 = C_{2k}$ if $k$ is odd and $BD_{4k} = C_k$ if $k$ is even, so the odd-order cyclic group $G^2$ is not a derived group of $G^1\subset \SL_2(\C)$, which is absurd. Thus $Z(G^1)$ has odd order. Now $G^1\subset \GL_2(\C)$ is nonabelian so $G^1/Z(G^1)\subset PGL_2(\C)$, and $(G^1/Z(G^1))^1 = G^2$ so $G^1/Z(G^1) = D_{2n}$, the dihedral group of order $2n$. From the classification of finite subgroups of $\GL_2(\C)$ in \cite{GL2C},  
$G^1= C_r \times D_{2n} = C_r\times (G^2\rtimes C_2)$ where $r$ is odd (all the other subgroups contain $-I_2$). But then $G^1/G^2=C_{2r}$ and $G^2=C_n$ are both cyclic, which implies that $G^1$ is cyclic and $G^2$ is trivial [proof: $G/C_G(G^2)$ is a subgroup of $\Aut(G^2)$, which is abelian, hence $G^1\subset C_G(G^2) $, so $G^2\subset Z(G^1)$. But then $G^1/G^2$ being cyclic implies $G^1/Z(G^1)$ is cyclic, so $G^1$ is cyclic]. 


{\bf STEP 4}: $G^1\neq Q_8$ and $G^1 \neq Q_8\rtimes C_3$. 

Suppose $G^1$ is either of these two groups. We have $\Aut(Q_8)\cong S_4\cong \Aut(Q_8\rtimes C_3)$. Therefore if $P$ is a $p$-Sylow subgroup of $G$ with $p\geq 5$ then $P\subset C_G(G^1)$. Since $G^1$ is nonabelian, and $P\subset Z(PG^1)$, ($P$ is abelian since it is a proper subgroup of odd order) $P$ is cyclic. Since $\rho(G^1)\subset \GL_2(\C)\times \{1\}$ is nonabelian and $\rho(G)\subset \GL_2(\C)\times \GL_1(\C)$, Schur's lemma implies that $\rho(C_G(G^1))$ is a group of matrices of the form $\diag(x, x, y)$, so $P\subset Z(G)$. Because $P\cap G^1 = 1$, we have $G/G^1\cong F\times P$ for an abelian group $F$. If $N$ is the preimage of $F$ in $G$, then $G = N\times P$. Clearly $|P|$ and $|Z(N)|$ are coprime, so $\rdim(N\times P) = \rdim(P)$. Therefore $P$ is trivial and the only primes that may divide $|G|$ are 2 and 3. 

Since $G^1$ contains a unique element $z$ of order 2, we have $z\in Z(G)$ and Schur's lemma forces $\rho(z) = \diag(-1, -1, 1)$. Let $K = \ker(\rho(G)\to \GL_2(\C))$, where the map is the projection onto the first factor. The group $K$ is nontrivial since $\rdim(G)>2$ and $K\subset \rho(Z(G))$ since $K$ consists of elements of the form $\diag(1, 1, y)$. If $k\in K$ had order 2 then $k$ and $\rho(z)$ would generate a noncyclic subgroup of $\rho(Z(G))$. Thus $G^1Z(G)$ would be nonabelian with noncyclic center, hence would equal $G$. But then $G^1 = (G^1Z(G))^1 \subset G^2Z(G)$, which is false for both $G^1 = Q_8$ and $G^1 = Q_8\rtimes C_3$. Therefore $K$ has odd order, hence is a 3-group, and if $k\in K$ has order 3 then $k=\diag(1,1, \omega)$ for a primitive 3rd root of unity $\omega$. 

Suppose there exists a scalar matrix $xI_3\in \rho(Z(G))$ for some $1\neq x\in \C^\times$. Replacing $x$ by a power if necessary, we can assume that $x=-1$ or $x=\omega$. In any case, $xI_3$, $\rho(z)$, and $\diag(1,1, \omega)$ will generate a noncyclic subgroup of $\rho(Z(G))$, yielding the same contradiction as above. Therefore $\rho(G)$ contains no nontrivial scalar matrices. If we write $\rho = \sigma \oplus \chi:G\to \GL_2(\C)\times \GL_1(\C)$, then the representation $\sigma\otimes \chi^{-1}:G\to \GL_2(\C)$ has trivial kernel, since $\sigma(g)\chi^{-1}(g) = I_2$ if and only if $\sigma(g) = \chi(g)I_2$ if and only if $\rho(g) = \chi(g)I_3$ is a scalar matrix. Thus $\rdim(G)<3$, a contradiction. 
\end{proof}

Now we will break into cases depending on the rank of the 2-Sylow subgroup of the abelian group $G^1=[G, G]$. First we prove two auxiliary results.

\begin{lemma} \label{Lemmacyclic2Sylow}
Suppose $G$ is minimally faithful of degree 3, with even order and cyclic 2-Sylow subgroups. Then 
$G=(C_q\rtimes C_{2^m})\times C_p\times C_p$
where $q, p$ are distinct odd primes, $m\geq 1$, and a generator of $C_{2^m}$ acts by inversion on $C_q$. 
\end{lemma}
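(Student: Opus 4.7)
The plan is to split off the 2-part via Burnside's transfer theorem, decompose the odd 2-complement by the $P$-action, and then use Clifford theory together with Proposition~\ref{Degree 2} to identify each factor.

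Since the 2-Sylow $P$ is cyclic, Burnside's normal 2-complement theorem gives $G = K \rtimes P$ with $K$ the odd-order normal 2-complement. The preceding Corollary forces $K$ to be abelian with at most two invariant factors, since $K$ is a proper subgroup of odd order. The group $G$ is nonabelian: Lemma~\ref{abelian} would otherwise give $G \cong C_r^3$, incompatible with $|G|$ even together with $P$ cyclic. Let $\sigma$ generate $P$; by coprimality of $|K|$ and $|P|$, $K$ decomposes as $K = L \oplus L'$ where $L = K^\sigma$ and $L'$ is a $\sigma$-stable complement on which $\sigma$ has no nontrivial fixed points. Then $L \subseteq Z(G)$ and $G = L \times M$ with $M := L' \rtimes P$ nonabelian.

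I would next rule out $L = 1$. In that case $(\sigma - 1)K = K$, so $[G,G] = K$ and $G^{\mathrm{ab}} = P$; Clifford theory applied to the normal abelian $K$ of 2-power index forces every irreducible representation of $G$ to have 2-power dimension. A faithful 3-dimensional $\rho$ must therefore decompose as $\chi \oplus \tau$ with $\chi$ one-dimensional (hence trivial on $K$) and $\tau$ two-dimensional irreducible. Faithfulness of $\rho$ forces $\tau|_K$ faithful; writing $\tau|_K = \chi_1 \oplus \chi_2$ with $\sigma$ swapping the summands, $\sigma^2$ fixes each $\chi_i$, so $(\sigma^2 - 1)K \subseteq \ker\chi_1 \cap \ker\chi_2 = 0$. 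Combined with $K^\sigma = 0$, this makes $\sigma$ act as inversion on $K$. A direct computation then gives $\rdim(G) = 2$ if $K$ is cyclic, and $\rdim(G) = 4$ if $K$ is noncyclic---in the latter case every 2-dimensional irreducible $\tau$ satisfies $\tau|_K = \chi_1 \oplus \chi_1^{-1}$, so the two summands share a nontrivial kernel, preventing any $1+2$ splitting from being faithful. Either way $\rdim(G) \neq 3$, a contradiction. Hence $L \neq 1$.

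With $L \neq 1$, $M$ is a proper nonabelian subgroup, so $\rdim(M) \leq 2$, and the same Clifford argument applied inside $M$ gives that $\sigma$ inverts a cyclic $L' = C_n$. If $L$ were cyclic, then $\gcd(|L|, |Z(M)|) = \gcd(|L|, 2^{m-1}) = 1$ and the scalar-extension fact from the preliminaries would give $\rdim(G) = \rdim(M) = 2$, contradicting the hypothesis. So $L$ is noncyclic; since $L \leq K$ has at most two invariant factors, exactly one prime $p$ has $L_p$ of rank 2. The subgroup $H := \Omega_1(L_p) \times M \cong C_p^2 \times M$ has noncyclic center $C_p^2 \times C_{2^{m-1}}$, so by Schur's lemma $H$ admits no faithful 2-dimensional representation and $\rdim(H) \geq 3$; a direct construction (faithful $\pi$ of $M$ twisted by one character of $C_p^2$, plus a second character of $C_p^2$) gives equality. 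Minimal faithfulness then forces $H = G$, so $L = C_p^2$. Finally, $n$ must be prime---else a divisor $1 < n' < n$ would give a proper subgroup $C_p^2 \times (C_{n'} \rtimes P)$ of $\rdim = 3$, contradicting minimality---and $p \neq q := n$, since otherwise $K_p$ would have rank at least 3, violating the two-invariant-factor bound on $K$. This yields $G = (C_q \rtimes C_{2^m}) \times C_p \times C_p$, as required. The main obstacle is the $L = 1$ reduction, where the Clifford bookkeeping forcing $\sigma$ to invert $K$ and then computing $\rdim$ separately in the cyclic and noncyclic cases must be handled carefully.
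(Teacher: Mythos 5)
Your proof is correct, and its technical core is genuinely different from the paper's. Both arguments begin identically (a normal $2$-complement $K$, abelian because it is a proper odd-order subgroup, in semidirect product with the cyclic $2$-Sylow $P$) and end identically (a noncyclic central fixed-point part forces $C_p\times C_p$ by minimality, and minimality again pins down the factor $C_q\rtimes C_{2^m}$). The difference is the middle. The paper cases on the order $n$ of the action of a generator of $P$ on $K$, and disposes of $n\geq 4$ and of a noncyclic inverted part by explicit eigenvalue and matrix computations inside $\GL_3(\C)$ (for instance, that no single matrix conjugates both $\diag(\zeta_a,\zeta_a^{-1},1)$ and $\diag(\zeta_b,1,\zeta_b^{-1})$ to their inverses). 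You instead split $K$ into fixed points and a coprime-action complement first, and then use It\^{o}'s theorem together with Clifford theory: every irreducible representation of $G$ (or of $M$) has $2$-power degree, so a faithful representation of the relevant dimension must contain a $2$-dimensional irreducible constituent whose restriction to the commutator subgroup is faithful; the stabilizer computation then shows that the square of the generator centralizes that subgroup and that the constituent character is faithful, which simultaneously forces the action to be inversion and the inverted part to be cyclic. This buys a more uniform argument (no separate $n=4$ and $n\geq 8$ cases, no matrix gymnastics) at the cost of invoking It\^{o}'s theorem; the paper's route is more elementary and self-contained. Two small points to tighten: when you write $\tau|_K=\chi_1\oplus\chi_2$ ``with $\sigma$ swapping the summands,'' you should note that the Clifford orbit cannot have size one, since then $\tau(K)$ would consist of odd-order scalar matrices in $\SL_2(\C)$ (as $K$ is the commutator subgroup), hence be trivial; and in the noncyclic-$K$ subcase of $L=1$ you need, and in fact only prove, that no faithful $3$-dimensional representation exists, so the assertion $\rdim(G)=4$ is stronger than what is established or required there.
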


\begin{proof}
Let $P$ be a 2-Sylow subgroup of $G$ and assume $P$ is cyclic. By Cayley's normal 2-complement theorem, $G \cong K\rtimes P$, where $K$ is a normal 2-complement. Since $K\neq G$ and has odd order, $K$ is abelian. Let $\phi: P \to \Aut(K)$ be the homomorphism defining $G$, let $x$ be a generator of $P$ and $n$ be the order of $\phi(x)$. Finally, let $H$ be the subgroup generated by $x^2$ and $K$, which is the unique subgroup of index 2 in $G$.

If $n=2$ then we claim $\rdim(G)\neq 3$ or $Z(G)$ is not cyclic. In this case $H$ is abelian. If $K$ is cyclic then so is $H$, and there is a faithful character $\chi:H\to \C^\times$. But then $\Ind_H^G(\chi)$ is a faithful 2-dimensional representation of $G$. Thus $K$ is not cyclic. Since $K$ has odd order and $x$ acts on $K$ by an automorphism of order 2, we can decompose $K = K^+\times K^-$ where $K^{\pm} := \{ k\in K: xkx^{-1} = k^{\pm 1}\}$. Since $K^+$ is centralized by $H$ and $x$, it lies in $Z(G)$. Clearly $K^-$ is normalized by $P$, so 
$$G \cong K^+\times (K^-\rtimes P).$$ 
Suppose $K^-$ is not cyclic, so it has two invariant factors. For independent generators $a$ and $b$ of $K^-$ we have $xax^{-1} = a^{- 1}$ and $xbx^{-1} = b^{- 1}$. Suppose $\rho:G\to \GL_3(\C)$ was faithful. Since $a$ and $a^{-1}$ are conjugate in $G$, the set of eigenvalues of $\rho(a)$ is equal to the set of its reciprocals. Since $a$ has odd order, a nontrivial eigenvalue of $\rho(a)$ cannot be equal to its inverse. Thus the eigenvalues of $\rho(a)$ are $\zeta_a^{\pm 1}, 1$, for some root of unity $\zeta_a$ of order equal to the order of $a$ in $G$. Similar considerations hold for $\rho(b)$. Since $K^-$ is not cyclic there is a common prime dividing the orders of $a$ and of $b$. This, the commutativity of $K$, and the assumption that $\rho$ is faithful means we can assume without loss of generality that $\rho(a) = \diag(\zeta_a, \zeta_a^{-1}, 1)$ and $\rho(b) = \diag(\zeta_b, 1, \zeta_b^{-1})$ (we cannot put the 1 in the same entry for both since then $\rho$ would not be faithful). But there is no matrix in $\GL_3(\C)$ that simultaneously conjugates both of the matrices $\rho(a)$ and $\rho(b)$ to their inverses: the centralizers of either matrix consist solely of diagonal matrices, so any matrix that conjugates $\rho(b)$ to $\rho(b)^{-1}$ is a diagonal matrix times $\begin{bmatrix} & & 1 \\ & 1 & \\ 1 & &  \end{bmatrix}$, but no such matrix can conjugate $\rho(a)$ to its inverse. Therefore $\rho(x)$ cannot be defined, so either $K^-$ is not cyclic or $\rdim(G)>3$. So $K^-$ is cyclic. Observe that $\rdim(K^-\rtimes P)=2$: let $N$ be the cyclic group $\langle K^-, x^2 \rangle$ and $\chi:N\to \C^\times$ a faithful character. Then $\Ind_N^{K^-\rtimes P}(\chi)$ is a faithful 2-dimensional representation.  Therefore $K^+$ is nontrivial. If $K^+$ is cyclic then $\rdim(G) = \rdim(K^-\rtimes P)=2$. Thus $K^+$, and hence $Z(G)$, is not cyclic. There exists an odd prime $p$ such that $C_p\times C_p\subset K^+$; the group $(C_p\times C_p) \times (K^- \rtimes P)$ is nonabelian with noncyclic center, so has representation dimension at least 3, so equals $G$ by minimality. Noting that every subgroup of $K^-$ is normalized by $P$ by definition, if $q$ is an odd prime such that $C_q\subset K^-$, then $(C_p\times C_p) \times (C_q \rtimes P)$ is nonabelian with noncyclic center, hence equals $G$.

If $n\geq 4$ then we claim $\rdim(G)>3$. First assume $n=4$. Suppose there is a cyclic subgroup $L\subset K$ which is normalized by $x$ on which conjugation by $x$ is an automorphism of order 4. We claim that $\rdim(PL)>3$ (in fact it equals 4). If $y$ generates the cyclic group $L$ then $xyx^{-1} = y^t$ where $t^2 \equiv -1 \pmod p$ for some prime $p$ dividing $|L|$. Thus $y$ is conjugate its inverse by $x^2$. If $\rho:PL\to \GL_3(\C)$ is a faithful representation then, as above, the eigenvalues of $\rho(y)$ are $\zeta^{\pm 1}, 1$ for some root of unity $\zeta$ of order equal to $|L|$. But since $y$ is conjugate to $y^t$, $\rho(y^t)$ has the same eigenvalues as $\rho(y)$, which forces $\zeta^t = \zeta^{-1}$ or $\zeta^t = \zeta$ or $\zeta^t=1$. These are all absurd, 
 so $L$ does not exist. Since $x^2$ acts on $K$ by an automorphism of order 2, we decompose $K=K^+\times K^-$ where $K^{\pm}:= \{k\in K: x^2kx^{-2} = k^{\pm 1} \}$. If $K^-$ is not cyclic then the proof of the $n=2$ case shows that the group generated by $x^2$ and $K^-$ has representation dimension greater than 3. If $K^-$ is trivial then $x$ acts on $K$ by an element of order 2, contradicting the assumption that $n=4$. So $K^-$ is cyclic and nontrivial. Note that $K^-$ is normalized by $x$ since $x^2(xkx^{-1})x^{-2} = xk^{-1}x^{-1}= (xkx^{-1})^{-1}$ for $k\in K^-$. But now $K^-$ satisfies the conditions of the group $L$ above, so cannot exist. Finally, assume $n\geq 8$. Then $x^2$ acts on $K$ by an automorphism of order 4, so the previous case shows that $\rdim(H)>3$, hence $\rdim(G)>3$ (note that the proof of the $n=4$ case made no use of the minimality of $G$ to derive $\rdim(G)>3$). 
\end{proof}

\begin{lemma} \label{2-group rdim2 lemma}
Suppose $G$ is a nonabelian 2-group containing only abelian proper subgroups and $\rdim(G)=2$. Then either $G=Q_8$ or $G=C_{2^k}\rtimes C_2$ where $k\geq 2$ and $C_2$ acts on $C_{2^k}$ by $g\mapsto g^{1+2^{k-1}}$. 

\end{lemma}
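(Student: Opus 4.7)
The plan is to apply the classification of minimal nonabelian $2$-groups to $G$ and then use the cyclic-center consequence of $\rdim(G)=2$ to whittle the list down to the two families in the statement.

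First I would invoke R\'edei's classification of minimal nonabelian $p$-groups. Since $G$ is a nonabelian $2$-group whose every proper subgroup is abelian, this classification gives three possibilities up to isomorphism: (i) $G\cong Q_8$; (ii) $G\cong \langle a,b \mid a^{2^m}=b^{2^n}=1,\ bab^{-1}=a^{1+2^{m-1}}\rangle$ with $m\geq 2$ and $n\geq 1$; or (iii) $G\cong \langle a,b,c \mid a^{2^m}=b^{2^n}=c^2=1,\ [a,b]=c,\ [a,c]=[b,c]=1\rangle$ with $m\geq n\geq 1$ and $m+n\geq 3$. Independently, since $\rdim(G)=2$ and $G$ is nonabelian, any faithful representation $G\to \GL_2(\C)$ must be irreducible, so by Schur's lemma $Z(G)$ is cyclic.

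Next I would rule out everything except the claimed families by inspecting each case. Case (i) produces $Q_8$, the first alternative in the conclusion. In case (ii), a short commutator calculation gives $a^2,b^2\in Z(G)$; if $n\geq 2$ then $\langle a^2,b^2\rangle\cong C_{2^{m-1}}\times C_{2^{n-1}}$ is a noncyclic central subgroup, contradicting cyclicity of $Z(G)$. Hence $n=1$, and $G\cong C_{2^m}\rtimes C_2$ with the action $g\mapsto g^{1+2^{m-1}}$, matching the second alternative with $k=m\geq 2$ (note the edge case $m=2$ gives $g\mapsto g^{-1}$, i.e.\ $D_8$). In case (iii), $c\in Z(G)$ by construction and $a^2\in Z(G)$ by direct calculation; moreover $c\notin \langle a\rangle$ because the given presentation genuinely has order $2^{m+n+1}$. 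The constraints $m\geq n$ and $m+n\geq 3$ force $m\geq 2$, so $\langle a^2,c\rangle\cong C_{2^{m-1}}\times C_2$ lies in $Z(G)$ and is noncyclic, a contradiction. Thus case (iii) never arises, and the two surviving possibilities match the conclusion.

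The main potential obstacle is case (iii): one must justify that the formal generator $c=[a,b]$ really contributes a central factor independent of $a^2$, equivalently that the presentation is nondegenerate of the expected order $2^{m+n+1}$. This is standard R\'edei material but should be made explicit. Everything else reduces to routine commutator arithmetic combined with the Schur-lemma input on $Z(G)$.
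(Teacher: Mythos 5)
Your proof is correct, but it takes a genuinely different route from the paper's. You import R\'edei's classification of minimal nonabelian $p$-groups and then use the single representation-theoretic input (a faithful $2$-dimensional representation of a nonabelian group is irreducible, so $Z(G)$ is cyclic by Schur's lemma) to kill the non-metacyclic family and to force $n=1$ in the metacyclic one; the commutator arithmetic you invoke ($a^2,b^2\in Z(G)$ because $[a,b]$ is central, and $\langle a^2\rangle\cap\langle b^2\rangle=1$ in the split presentation) is sound, and the obligation you flag --- that the presentations really have orders $2^{m+n}$ and $2^{m+n+1}$, so that the exhibited central subgroups are genuinely noncyclic --- is precisely what must be extracted from R\'edei's theorem. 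The paper instead argues from scratch: if every proper subgroup is cyclic it falls back on its own Proposition \ref{Degree 2} to conclude $G=Q_8$; otherwise it fixes $V\cong C_2\times C_2$ inside an abelian index-$2$ subgroup $H$, uses cyclicity of $Z(G)$ to see that the map $G\to\Aut(V)$ is nontrivial, and reads off the presentation $\langle g,v\colon g^{2^k}=v^2=1,\ vgv=g^{1+2^{k-1}}\rangle$ directly from an element $g\notin H$. Your route is shorter and would generalize verbatim to odd $p$, at the price of citing an external classification theorem; the paper's version is self-contained and stays within machinery it has already developed. Both hinge on the same key fact that $\rdim(G)=2$ forces $Z(G)$ cyclic.
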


\begin{proof}
If $G$ only has cyclic proper subgroups then $G$ is minimally faithful of degree 2, so by our earlier classification $G=Q_8$. If $G\supset V= C_2\times C_2$ then we can choose an index-2 subgroup $H\subset G$ with $H\supset V$. The group $H$ is abelian by assumption, and $V$ is the characteristic subgroup of $H$ generated by elements of order 2, so $N_G(V)=G$. The map $G\to \Aut(V)\cong S_3$ factors through $G/H$ and is nontrivial because $Z(G)$ is cyclic. Thus there exists a unique $1\neq z\in V\cap Z(G)$. Let $v$ and $vz$ be the other nontrivial elements of $V$, and let $g\in G\setminus H$. We have $vgvg^{-1}  = z$. If $g^2 = 1$ then $g$ and $V$ generate the nonabelian group $C_4\rtimes C_2$, which must therefore equal $G$. If $g^2\neq 1$. then some power of $g^{2^{k-1}}.$ is nontrivial and lies in $V$, hence is equal to $z$ since $g$ does not commute with $v$ or $vz$. This yields the groups $\langle g, v: g^{2^k} = v^2 = 1, vgv = g^{1 + 2^{k-1}} \rangle $, where $k\geq 2$. Notice these groups have representation dimension $2$ since we can induce a faithful character of the index-2 cyclic 
subgroup generated by $g$ up to $G$. 
\end{proof}

\begin{theorem} \label{2-rank trichotomy lemma}
Let $G$ be nonabelian and minimally faithful of degree 3, and let $0\leq  r\leq 2$ be the rank of the $2$-Sylow subgroup of $G^1$. 

A) If $r=2$
then $G = (C_2\times C_2)\rtimes C_{3^k}$ for some $k\geq 1$. 

B) If $r=1$ 
then either 

	i) $G$ is a 2-group, 

	ii) $G = Q_8\times C_p\times C_p$ for an odd prime $p$, or

	iii) $G  = (C_{2^k}\rtimes C_2)\times C_p\times C_p$ for an odd prime $p$, where $k\geq 2$ and $C_2$ acts on $C_{2^k}$ by $g\mapsto g^{1+2^{k-1}}$.  

C) If $r=0$ then either 

i) $G$ has odd order, 

ii) $G$ is the group appearing in lemma \ref{Lemmacyclic2Sylow}, or 

iii) $G= (C_p\rtimes C_{2^m}) \times C_2$ where $m>1$, $p$ is an odd prime, and a generator of $C_{2^m}$ acts by inversion on $C_p$. 

\end{theorem}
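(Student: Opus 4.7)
My plan is to analyze the three cases in turn, relying on the previous theorem ($G^1 = [G,G]$ is abelian) and on Lemma~\ref{evenorderreducibility}: whenever $Z(G) \cap G^1$ has even order, any faithful $\rho \colon G \to \GL_3(\C)$ is reducible and $G^1$ embeds in $\SL_2(\C)$, hence (being abelian) is cyclic.

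\textbf{Case A} ($r=2$). Let $V \subset G^1$ be the $2$-torsion of the 2-Sylow, so $V \cong C_2 \times C_2$ and $V$ is characteristic in $G^1$, hence normal in $G$. If $V$ had a nontrivial element in $Z(G)$ then Lemma~\ref{evenorderreducibility} would force $G^1$ cyclic, contradicting $r=2$; hence the $G$-action on $V$ is fixed-point-free. Since $G/G^1$ is abelian, its image in $\Aut(V) \cong S_3$ must be the unique abelian fixed-point-free subgroup $C_3$. Pick $g$ of 3-power order in a 3-Sylow of $G$ mapping to a generator and set $T = \langle g \rangle \cong C_{3^k}$; then $VT = V \rtimes T$ contains $V \rtimes C_3 \cong A_4$, so $\rdim(VT) \geq 3$ and minimality forces $VT = G$. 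Since $G/V$ is abelian we get $G^1 \subset V$, so $G^1 = V$ and $G = (C_2 \times C_2) \rtimes C_{3^k}$.

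\textbf{Case B} ($r=1$). The cyclic 2-Sylow of $G^1$ has a unique involution $z$, characteristic in $G^1$ and therefore central; so Lemma~\ref{evenorderreducibility} gives $G^1$ cyclic. If $G$ is a 2-group we are in (i). Otherwise I would first show $G^1 \subset Z(G)$ (so that $G$ is nilpotent of class $\leq 2$ and a direct product of its Sylow subgroups $G = P_2 \times P_{\text{odd}}$, with $P_{\text{odd}}$ abelian as a proper odd-order subgroup), by analyzing the reducible splitting $\rho = \sigma \oplus \chi$ via Schur's lemma on $\sigma(C_G(G^1))$. The ``useful fact'' of Section~2 then rules out $P_{\text{odd}}$ cyclic, so pick an odd prime $p$ with $C_p \times C_p \subset P_{\text{odd}}$. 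For any $H \subsetneq P_2$, the subgroup $H \times C_p \times C_p \subsetneq G$ has $\rdim \leq 2$; if $H$ were nonabelian no faithful 2-dimensional representation could exist (a reducible one would kill $[H,H]\neq 1$, an irreducible one would force the noncyclic central $C_p^2$ to act by scalars), so $H$ is abelian. Thus $P_2$ has only abelian proper subgroups and Lemma~\ref{2-group rdim2 lemma} identifies $P_2$ as $Q_8$ or $C_{2^k} \rtimes C_2$; applying the same dichotomy with $H = P_2$ shows $P_2 \times C_p^2 = G$, so $P_{\text{odd}} = C_p \times C_p$, giving (ii) or (iii).

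\textbf{Case C} ($r=0$). If $|G|$ is odd we are in (i). Otherwise the 2-Sylow $P$ embeds into the abelian $G/G^1$ (as $P \cap G^1 = 1$) and is therefore abelian; if $P$ is cyclic, Lemma~\ref{Lemmacyclic2Sylow} yields (ii). If $P$ is noncyclic abelian, Schur-Zassenhaus writes $G = G^1 \rtimes Q$ with $Q$ abelian containing $P$ as its 2-Sylow. I would argue sequentially: (a) $Q_{\text{odd}}$ centralizes $G^1$, else $G^1 Q_{\text{odd}}$ is nonabelian of odd order, violating our corollary; (b) minimality then forces $Q_{\text{odd}} = 1$, so $G = G^1 \rtimes P$; (c) if $G^1$ were larger than a single cyclic factor $C_p$ (say $G^1 \supset C_{p^2}$ or two distinct primes), passing to the proper subgroup $C_p \rtimes P$ (or analogous) and analyzing its center via the semidirect structure produces a nonabelian proper subgroup with noncyclic center, hence $\rdim \geq 3$, contradicting minimality (the only escape, $a = 1$, gives $\rdim(G) = 2$ via the ``useful fact''); (d) writing $P = C_{2^a} \times C_{2^b}$, the ``useful fact'' forces the kernel-factor to be $C_2$, giving $b=1$, and the action of $C_{2^a}$ on $C_p$ must be inversion (the unique involution of $\Aut(C_p)$). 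This yields (iii), with $m = a > 1$ since $m=1$ produces $D_{2p} \times C_2$ of $\rdim 2$. The main obstacle is this Case~C chain, where no single subgroup pins $G$ down (as $VT$ does in Case~A) and the target structure must be extracted from repeated minimality arguments against specific subgroups of $G^1 \rtimes P$; a secondary subtlety is justifying the centrality $G^1 \subset Z(G)$ in Case~B when $G$ is not a 2-group.
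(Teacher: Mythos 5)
Your overall architecture matches the paper's (case split on $r$, Lemma \ref{evenorderreducibility} to force $G^1$ cyclic, minimality arguments against explicitly constructed subgroups), but there is a concrete error in Case A. You claim that $VT = V\rtimes C_{3^k}$ contains $V\rtimes C_3\cong A_4$ and use $\rdim(A_4)=3$ to conclude $VT=G$. This is false for $k\geq 2$: the unique subgroup of order $3$ in $T=\langle g\rangle$ is $\langle g^{3^{k-1}}\rangle$, and since $g$ acts on $V$ through an automorphism of order $3$, the element $g^{3^{k-1}}$ acts trivially on $V$ whenever $k\geq 2$. Every element of order $3$ in $VT$ then centralizes $V$, so the only subgroup of order $12$ is $V\times C_3\cong C_6\times C_2$, not $A_4$. (Indeed, if $A_4$ embedded properly in $(C_2\times C_2)\rtimes C_{3^k}$, that group could never be minimally faithful of degree 3, contradicting the very classification being proved.) The fact you need, $\rdim(VT)\geq 3$, is true but requires a direct argument, e.g.\ the paper's: in a faithful $2$-dimensional representation $V$ would act diagonally, forcing one nontrivial element of $V$ to map to the scalar $\diag(-1,-1)$ and hence be central, whereas no involution of $VT$ is central.

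Two further steps are asserted without adequate justification. In Case B, the claim that $G^1\subset Z(G)$ when $G$ is not a $2$-group is the crux of the whole case and cannot be extracted from Schur's lemma alone: the reducible splitting only shows that any $n\notin C_G(G^1)$ inverts the cyclic group $G^1$, leaving open the possibility $|G^1|>2$ with $[G:C_G(G^1)]=2$; handling that possibility is precisely the paper's lengthy analysis of the decomposition $G=K^+\times(K^-\rtimes P)$, and the conclusion $|G^1|=2$ is only visible a posteriori. In Case C, Schur--Zassenhaus produces a complement to $G^1$ only if $\gcd(|G^1|,[G:G^1])=1$, which is not known at that point in the argument; the paper instead works with $C_G(G^1)$ (which contains every odd-order element, so $G=PC_G(G^1)$) and analyzes whether the Klein subgroup $V\subset P$ is central. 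Your Cases B and C would need to be rebuilt along these lines before the proposal could be considered a proof.
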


\begin{proof}
Let $\rho:G\to \GL_3(\C)$ be a faithful representation of $G$. 

A) The elements of order exactly 2 inside $G^1$ generate a subgroup $V\cong C_2\times C_2$; it is characteristic in $G^1$ so $N_G(V) =G$. Since $\Aut(V)=S_3$, the index $[G: C_G(V)]$ divides 6. The centralizer $C_G(V)$ is abelian since $V\subset Z(C_G(V))$ and $V$ is not cyclic. If $\rho$ was reducible then $G^1$ would be an abelian subgroup of $\SL_2(\C)$, hence cyclic. But $G^1\supset V$, so $\rho$ is irreducible. Thus $3$ divides $|G|$ and $G$ contains a nontrivial 3-Sylow subgroup $P$. If $P\subset C_G(V)$ then $[G:C_G(V)]=2$, since the image of the map $G\to \Aut(V)$ could contain no element of order 3. But if $G$ has an index-2 abelian subgroup then every irreducible representation of $G$ has degree dividing 2, contradicting the irreducibility of $\rho$. Thus $P$ does not centralize $V$. Let $x\in P\setminus C_G(V)$. Then $x$ and $V$ generate a group isomorphic to $R:=(C_2\times C_2)\rtimes C_{3^k}$. Every proper subgroup of $R$ is abelian of rank at most 2, so it suffices to show that $\rdim(R)=3$. 

The group $R$ has a faithful 3-dimensional representation: let $\chi$ be a character of the index-3 abelian subgroup $H:=C_2\times C_2\times C_{3^{k-1}}$ with $\ker(\chi)$ of order 2, then $\rho=Ind_H^R(\chi)$ is readily checked to be faithful. We claim that the group $R$ has no faithful 2-dimensional representation. The subgroup $C_2\times C_2$ is abelian so we can assume it acts diagonally on $\C^2$, but the only diagonal $2\times 2$ matrices of order 2 are $\diag(1, -1), \diag(-1, 1), \diag(-1, -1)$. The third matrix must lie in the center but no order-2 element of $R$ is central. So $\rdim(R)=3$ and $R=G$.

B) We will first show that either $G$ is a 2-group or $G = P \times C_p\times C_p$ for a 2-Sylow group $P$ and odd prime $p$. Since $r=1$, $G^1$ has a nontrivial cyclic 2-Sylow subgroup. Hence $G^1$ contains a unique element $z$ of order 2, lemma \ref{evenorderreducibility} applies: $\rho$ is reducible and $\rho(G^1)\subset \SL_2(\C)\times \{1\}$, so $G^1$ is cyclic. Therefore $\rho(z) = \diag(-1, -1, 1)$, where without loss of generality we have assumed $\rho(G)\subset \GL_2(\C)\times \GL_1(\C)$. 

Suppose that $|G^1|=2$. Then $G^1\subset Z(G)$, and $G=P \times K$ where $K$ is abelian of odd order [proof: let $a\in G$ and let $b\in G$ of odd order. If $aba^{-1} = bz$, then the left side has odd order and the right side even order. Thus $aba^{-1}b^{-1}=1$]. If $K\supset C_p\times C_p$ for an odd prime $p$ then $P\times C_p\times C_p$ is nonabelian with noncyclic center, so has representation dimension 3 and must equal $G$. Otherwise $K$ is cyclic, which implies $\rdim(P\times K) = \rdim(P)$, so by minimality $G=P$. 

Now suppose $g$ generates the cyclic group $G^1$ and has (even) order greater than two. Since $G^1$ is abelian we can assume $\rho(G^1)$ consists of diagonal matrices. Thus $\rho(g) = \diag(x, x^{-1}, 1)$ for some $x\neq \pm 1$ and $\rho(C_G(G^1))$ is contained in the abelian group of diagonal matrices. If $n\in G\setminus C_G(G^1)$, then $\rho(ngn^{-1}) = \diag(y, y^{-1}, 1)$, since it must lie in $\rho(G^1)$. The eigenvalues of $\rho(g)$ and $\rho(ngn^{-1})$ are the same but $g\neq ngn^{-1}$ so $y=x^{-1}$. Thus $n$ acts on $G^1$ by inversion. Since $n$ was an arbitrary element of $G\setminus C_G(G^1)$, we have $[G:C_G(G^1)]=2$.

Write $C_G(G^1)  = Q\times K =  Q\times K^+\times K^-$ where $Q$ is the 2-sylow subgroup of $C_G(G^1)$, $K$ is its subgroup of odd-order elements, and $K^\pm := \{ k\in K: nkn^{-1} = k^{\pm 1}  \}$, noting that the action of $n$ on $K$ is by an element of order 2 since $[G:C_G(G^1)]=2$. Both $Q$ and $K^\pm$ are normal in $G$, with $Q$ being characteristic in the abelian group $C_G(G^1)$ and $K^\pm$ being normalized by definition. Replacing $n$ by an odd power if necessary, we can assume that the order of $n$ is a power of $2$. Let $P= \langle Q, n\rangle$, so $P$ is a 2-Sylow subgroup of $G$ and 
$$G= K^+\times ( K^-\rtimes P  ).$$ 
If $k\in K^-$ then $nkn^{-1}k^{-1} = k^{-2}\in G^1$, but $k$ has odd order so $K^- \subset G^1$. Every nontrivial commutator has the form $ana^{-1}n^{-1}$ for some $a\in Q\times K^-$. If $Q\subset Z(P)$ then the nontrivial commutators would all have odd order, contradicting our initial assumption. Thus $P$ is nonabelian. 

If $K^+$ is not cyclic then it contains $C_p\times C_p$ for an odd prime $p$, so $C_p\times C_p\times P$ is a nonabelian subgroup of $G$ with noncyclic center, so has representation dimension 3 and must equal $G$. If $K^+$ is cyclic and nontrivial then $\rdim(G) = \rdim ( K^-\rtimes P  )$ since $K^-\cap Z(G)=\{1\}$, but $\rdim ( K^-\rtimes P  )<\rdim(G)$ by minimality so this is absurd. 

Now assume that $K^+$ is trivial. We will show that $Z(P)$ is not cyclic, which forces $G=P$. Let $L$ be the kernel of the projection $\rho(G)\to \GL_2(\C)$. So $L\subset \rho(Z(G))$ is those matrices of the form $\diag(1, 1, y)$. If $L$ was trivial then $\rdim(G)<3$. Since $\rho(K^-)\subset \rho(G^1)\subset \SL_2(\C)\times\{1\}$, we have $\rho(K^-)\cap \rho(L)= \{1\}$. Hence $|L|$ is a nontrivial power of 2, so $L\ni \diag(1, 1, -1)$, which together with $\rho(z)= \diag(-1, -1, 1)$ creates a noncyclic subgroup of $\rho(Z(G)\cap P)\subset \rho(Z(P))$. 

We have shown that if $G$ is not a 2-group then $G = Q\times C_p\times C_p$ for a 2-group $Q$ and odd prime $p$. Suppose $G$ is not a 2-group. Clearly the 2-Sylow subgroup $Q$ of $G$ must be nonabelian. If $Q$ contained a nonabelian proper subgroup $Q_1$, then $Q_1\times C_p\times C_p$ would be nonabelian with noncyclic center, hence would have representation dimension 3, contradicting minimality of $G$. Thus every proper subgroup of $Q$ is abelian. Since $\rdim(Q)=2$, lemma \ref{2-group rdim2 lemma} gives the two possible structures of $Q$, hence of $G$. That these groups both have a faithful representation of dimension 3 is simple: let $\rho = \sigma\oplus \chi: G\to \GL_3(\C)$ where $\sigma: Q \times C_p \to \GL_2(\C)$ and $\chi: C_p\to \GL_1(\C)$ are faithful.

C) Let $P$ be a 2-Sylow subgroup of $G$, so $P$ is abelian since $G^1$ has odd order. 
If $P$ is trivial then $|G|$ is odd. If $P$ is cyclic then lemma \ref{Lemmacyclic2Sylow} applies. So we may assume $P$ has two invariant factors. If $g\in G$ has odd order then $g$ and $G^1$ generate an odd-order subgroup of $G$. Since $P$ is nontrivial, any odd-order subgroup of $G$ is proper, hence abelian. Thus $g\in C_G(G^1)$ and $C_G(G^1)$ contains all elements of $G$ of odd order. Therefore $G = PC_G(G^1)$. We have $P\supset V\cong C_2\times C_2$ since $P$ is not cyclic. 

Suppose that there exists $ v\in V\setminus Z(G)$. From this we will derive a contradiction. Since $G^1$ is abelian of odd order, $G^1 = K^+\times K^-$ where $K^\pm := \{k\in G^1: vkv = k^{\pm 1} \}$. Note $K^-$ is nontrivial since $v\not\in Z(G)$. If $1\neq k\in K^-$ then $\rho(k)\in \rho(G^1)\subset \SL_3(\C)$ is conjugate to its inverse, so its eigenvalues are $\zeta^{\pm1}$, 1 for some odd root of unity $\zeta\neq 1$. Choosing a basis so that $\rho(k) = \diag(\zeta, \zeta^{-1}, 1)$, we see $\rho(C_G(k))$ is contained in the abelian subgroup of diagonal matrices. Thus $C_G(G^1) = C_G(k)$ (both consist under $\rho$ of the diagonal matrices in $\rho(G)$) is an abelian normal subgroup of $G$. A conjugate of $\rho(k)$ under $\rho(G)$ lies in $\rho(G^1)$ so it is diagonal with the same eigenvalues. The permutations of the diagonal entries of $\rho(k)$  induces a homomorphism $G\to S_3$ with kernel $C_G(G^1)$. So $[G:C_G(G^1)]$ divides 6 and is divisible by 2. Since $[G:C_G(G^1)] = [P:P\cap C_G(G^1)]$ is a power of 2, we have $[G:C_G(G^1)]=2$. Now $v\not\in C_G(G^1)$ and $v$ has order 2 so $G = C_G(G^1)\rtimes \langle v\rangle $. Let $x\in C_G(G^1)\cap P$ be the generator of the nontrivial and cyclic 2-Sylow subgroup of $C_G(G^1)$. 
If $T$ is the subgroup of $C_G(G^1)$ consisting of odd-order elements then $G = \langle x \rangle \times (T\rtimes \langle v\rangle )$. Since $Z(T\rtimes \langle v\rangle) $ has odd order, $\rdim (G) = \rdim (T\rtimes \langle v\rangle)$. But $T\rtimes \langle v\rangle$ is a proper subgroup of $G$, contradicting minimality of $G$. 

Thus $V\subset Z(G)$. In particular, $Z(G)$ is not cyclic so by Schur's lemma $\rho$ is reducible. We can assume without loss of generality that $\rho(G)\subset \GL_2(\C)\times \GL_1(\C)$, hence $\rho(G^1)\subset \SL_2(\C)\times \{1\}$. In particular, $G^1$ is cyclic. Let $g\in G^1$ be nontrivial, so that we may assume $\rho(g) = \diag(\zeta, \zeta^{-1}, 1)$ for some odd root of unity $\zeta\neq 1$. It follows that $\rho(C_G(G^1))$ consists of the diagonal matrices in $\rho(G)$, hence $C_G(G^1)$ is abelian. Since $G = PC_G(G^1)$ is nonabelian and $P$ is abelian, there exists $y\in P\setminus C_G(G^1)$. Since $\rho(ygy^{-1})$ has the same eigenvalues as $\rho(g)$, is not equal to $\rho(g)$, and lies in $\rho(G^1)$, we must have $ygy^{-1}=g^{-1}$. The subgroup of $G$ generated by $y$, $g$, and $V$ is nonabelian with noncyclic center, so must be equal to $G$. The same is true if we replace $g$ by $g^i\neq 1$, so by minimality $g$ has prime order $p$. The group $H$ generated by $y$ and $g$ is isomorphic to $C_p\rtimes C_{2^m}$. Since some power of $y$ lies in $V$, the group $G$, which is generated by $H$ and $V$, must be isomorphic to $(C_p\rtimes C_{2^m} )\times C_2$. If $m=1$ this group has representation dimension 2, so $m>1$. A faithful 3-dimensional representation can be constructed similarly to the previous case.  
\end{proof}

Remark. The proof of part b) broke into the cases $|G^1|=2$ and $|G^1|\geq4$, and showed that $|G^1|$ is always a power of 2. The final classification reveals that in fact we must have $|G^1|=2$ in the case $r=1$.

Next we shall show that if $G$ is a minimally faithful 2-group of degree 3 then $Z(G)$ is not cyclic. This will be useful in the subsequent result that classifies the minimally faithful 2-groups of degree 3.

\begin{lemma}
Suppose $G$ is a nonabelian $2$-group and minimally faithful of degree 3. Then $Z(G)$ is not cyclic. 
\end{lemma}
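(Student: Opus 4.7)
The plan is to argue by contradiction. Suppose $Z(G)$ is cyclic, and fix a faithful $\rho: G \to \GL_3(\C)$. My first goal is to show $\rho$ decomposes as $\rho = \sigma \oplus \chi$ with $\sigma$ an irreducible $2$-dimensional representation and $\chi$ a $1$-dimensional character. Reducibility follows from Lemma \ref{evenorderreducibility} applied to the nontrivial normal $2$-subgroup $[G,G]$: a standard $p$-group fact gives that $[G,G] \cap Z(G)$ is nontrivial, so in particular has even order. The $2$-dimensional summand $\sigma$ must then be irreducible, since otherwise $\rho$ splits into three $1$-dimensional characters, forcing $G$ to be abelian.

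Next I analyze $K := \ker\sigma$. The restriction $\rho|_K$ has the form $I_2 \oplus \chi|_K$, which commutes with all of $\rho(G)$; faithfulness of $\rho$ then yields $K \subset Z(G)$. Faithfulness also forces $\chi|_K$ to be injective, so $K$ is cyclic. If $K$ were trivial, $\sigma$ would be a faithful $2$-dimensional representation of $G$, contradicting $\rdim(G) = 3$. Hence $K$ is a nontrivial cyclic $2$-subgroup of $Z(G)$.

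The contradiction now falls out. Since $Z(G)$ is cyclic, it contains a unique involution $z$, and every nontrivial $2$-subgroup of $Z(G)$ must contain $z$. Both $K$ and $[G,G] \cap Z(G)$ are nontrivial $2$-subgroups of $Z(G)$, so $z \in K \cap [G,G]$. But $[G,G] \subset \ker\chi$ (as $\chi$ is a $1$-dimensional character) and $K \subset \ker\sigma$, giving $\rho(z) = I_3$ and contradicting faithfulness. I expect the main conceptual point to be the observation that, under the assumption of cyclic $Z(G)$, the two a priori unrelated subgroups $K$ and $[G,G]$ are forced to share the same involution inside $Z(G)$; the technical step $K \subset Z(G)$ is the one place where the faithfulness argument must be deployed carefully, and everything else is bookkeeping around the unique involution of the presumed-cyclic $Z(G)$.
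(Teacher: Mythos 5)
Your proof is correct, and its endgame is genuinely different from (and shorter than) the paper's. Both arguments share the same skeleton: $\rho$ is reducible, splits as $\sigma\oplus\chi$ with $\sigma$ a $2$-dimensional irreducible, and $K=\ker\sigma$ is a nontrivial subgroup of the cyclic $Z(G)$, hence contains its unique involution $z$. But from there the paper turns to the \emph{other} kernel $N=\ker\chi$: it deduces $N\cap Z(G)=1$ from $z\in K$, then argues structurally that $N$ must be abelian (here it invokes minimality, since $Z(G)N$ is a proper subgroup with noncyclic center), then cyclic, and finally that the involution of the normal cyclic group $N$ must be central --- contradicting $N\cap Z(G)=1$. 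You instead observe that $[G,G]\cap Z(G)$ is nontrivial (the standard fact that a nontrivial normal subgroup of a $p$-group meets the center), so $z$ also lies in $[G,G]\subset\ker\chi$; combined with $z\in K=\ker\sigma$ this gives $\rho(z)=I_3$ outright, killing faithfulness. Your route never touches $N$ and, notably, uses minimal faithfulness only through $\rdim(G)=3$ (to get $K\neq 1$), whereas the paper's proof leans on minimality to control $N$. The trade-off is essentially zero: your argument is more self-contained and would survive weakening the hypothesis from ``minimally faithful of degree 3'' to ``$\rdim(G)=3$,'' while the paper's version exercises the same proper-subgroup machinery it uses elsewhere. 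One cosmetic remark: you do not need Lemma \ref{evenorderreducibility} to get reducibility --- a $2$-group has no irreducible representation of odd degree $>1$ since irreducible degrees divide the group order --- but citing it is harmless since its hypothesis is indeed satisfied.
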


\begin{proof}
Suppose $Z(G)$ is cyclic and $\pi:G\to \GL_2(\C)\times \GL_1(\C)$ is a faithful 3-dimensional representation, noting that no 3-dimensional representation of $G$ is irreducible since $G$ is a 2-group. Define the two subgroups $$N = \ker(G\xra{\pi} \GL_2(\C)\times \GL_1(\C) \to \GL_1(\C) )$$ $$K=\ker(G\xra{\pi} \GL_2(\C)\times \GL_1(\C) \to \GL_2(\C) ).$$ Both $K$ and $N$ are nontrivial since $\rdim(G)=3$, while $N\cap K=1$. Clearly $K\subset Z(G)$ since $\pi(K)$ consists of matrices of the form $\diag(1, 1, c)\in Z(\GL_2(\C)\times \GL_1(\C))$. Let $z\in Z(G)$  be the unique element of order 2. For any subgroup $M\subset G$ we have $z\in M$ iff $M\cap Z(G)\neq 1$. Since $K\subset Z(G)$, $z\in K$, so $z\not\in N$ and $N\cap Z(G)=1$. Since $N$ is a 2-group it has nontrivial center. The subgroup $Z(G)N \cong Z(G)\times N$ therefore has noncyclic center, and must therefore be abelian (it cannot equal $G$ since $Z(G)$ is cyclic). So $N$ is abelian, and in fact $N$ must be cyclic: othweise $N\supset C_2^2$ and $Z(G)N\supset C_2^3$. Let $n\in N$ be the unique element in $N$ of order 2. Since $N$ is a normal subgroup of $G$, $n\in Z(G)$, $n\neq z$, a contradiction.   
\end{proof}







\begin{proposition}
Let $G$ be a nonabelian 2-group. 

a) If $G$ is minimally faithful of degree 3 and contains a nonabelian proper subgroup then $G = Q_8\times C_2$. 

b) Suppose $G$ contains only abelian proper subgroups. Then $G$ is minimally faithful of degree 3 if and only if every index-2 subgroup of $G$ has two invariant factors. 
\end{proposition}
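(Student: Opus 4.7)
I plan to handle parts a) and b) in turn, using two tools repeatedly: the preceding lemma (that $Z(G)$ is noncyclic for a nonabelian $2$-group minimally faithful of degree~$3$) and Lemma~\ref{2-group rdim2 lemma} (classifying nonabelian $2$-groups with only abelian proper subgroups and representation dimension~$2$).

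\textbf{Part a).} I would pick a nonabelian proper subgroup $H\leq G$ of minimal order; then every proper subgroup of $H$ is abelian, $\rdim(H)\leq 2$ by minimal faithfulness, and $H$ nonabelian gives $\rdim(H)=2$, so Lemma~\ref{2-group rdim2 lemma} yields $H\cong Q_8$ or $H\cong C_{2^k}\rtimes C_2$ with the modular action ($k\geq 2$), both with cyclic center. Since $Z(G)$ is noncyclic it contains a copy of $C_2\times C_2$ and hence at least three elements of order~$2$, whereas $Z(G)\cap H\subseteq Z(H)$ has at most one; choose $z\in Z(G)\setminus H$ of order~$2$. Then $\langle z\rangle\times H$ has noncyclic center, hence $\rdim\geq 3$, and minimality forces $\langle z\rangle\times H=G$. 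It remains to rule out $H\cong C_{2^k}\rtimes C_2$: for such $H$ the abelian subgroup $\langle z\rangle\times\langle g^2,v\rangle\cong C_2\times C_{2^{k-1}}\times C_2$ is proper in $G$ with $3$ invariant factors, hence representation dimension~$3$, contradicting minimality. Therefore $H\cong Q_8$ and $G\cong Q_8\times C_2$.

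\textbf{Part b), forward direction.} Each index-$2$ subgroup $M$ is abelian and proper, so $\rdim(M)\leq 2$ gives at most $2$ invariant factors. If some $M$ were cyclic, the classification of $2$-groups with a cyclic maximal subgroup combined with the ``only abelian proper subgroups'' hypothesis forces $G\in\{D_8,Q_8\}\cup\{M_{2^n}:n\geq 4\}$, each of which has cyclic center, contradicting the preceding lemma.

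\textbf{Part b), reverse direction.} Every proper subgroup lies in some maximal $M\cong C_{2^a}\times C_{2^b}$, which is $2$-generated, so every proper subgroup has $\leq 2$ invariant factors and $\rdim\leq 2$. It remains to show $\rdim(G)=3$. If $\rdim(G)=2$, Lemma~\ref{2-group rdim2 lemma} gives $G$ a cyclic maximal subgroup, contradicting the hypothesis, so $\rdim(G)\geq 3$. For $\rdim(G)\leq 3$, I build a faithful $\rho=\sigma\oplus\chi$ with $\sigma$ an irreducible $2$-dimensional representation and $\chi$ a linear character. Note $Z(G)\subseteq M$ for every maximal $M$ (else $G=MZ(G)$ with $M$ abelian would make $G$ abelian via $[m_1z_1,m_2z_2]=[m_1,m_2]=1$), so $Z(G)$ has $\leq 2$ invariant factors; moreover $Z(G)$ is noncyclic, since otherwise a faithful character of cyclic $Z(G)$ (automatically sending the order-$2$ element $z\in G'=\langle z\rangle$ to $-1$) extends to a character $\psi$ on some maximal $M$, and $\Ind_M^G\psi$ would be a faithful irreducible $2$-dimensional representation, contradicting $\rdim(G)\geq 3$. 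Writing $Z(G)=C_{2^a}\times C_{2^b}$ and using $|G'|=2$ (standard for minimal nonabelian $2$-groups), a short case analysis on which of the three order-$2$ elements of $Z(G)$ is $z$ produces $\theta:Z(G)\to\C^\times$ with $\theta(z)=-1$ and $\ker\theta$ cyclic. Extending $\theta$ to a linear character $\psi:M\to\C^\times$, the relation $g^{-1}xg=zx$ for $g\notin M$ and $x\in M\setminus Z(G)$ (a consequence of $[g,x]\in G'\setminus\{1\}$) gives $\psi^g(x)=-\psi(x)$, whence $\sigma=\Ind_M^G\psi$ is irreducible of dimension $2$ with $\ker\sigma=\ker\theta$. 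Since $\theta(z)=-1$ we have $\ker\sigma\cap G'=1$, so the cyclic subgroup $\ker\sigma$ injects into the abelianization $G/G'$; lifting a faithful character of $\ker\sigma$ through the surjection $\widehat{G/G'}\to\widehat{\ker\sigma}$ yields a linear $\chi$ on $G$, and $\rho=\sigma\oplus\chi$ has trivial kernel. The main obstacle is this construction---chiefly verifying $\ker\Ind_M^G\psi=\ker\theta$ from $\psi(z)\neq 1$ and $g^{-1}xg=zx$, and lifting the faithful character on the cyclic $\ker\sigma\subseteq Z(G)$ to all of $G$.
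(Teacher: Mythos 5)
Your argument is correct in all three pieces, and the overall skeleton matches the paper's, but the reverse direction of b) is organized genuinely differently, so a comparison is worthwhile. In a) you and the paper reach the same decomposition $G\cong Q_8\times C_2$; the paper gets there by writing $G=NV$ for a Klein four-subgroup $V\subseteq Z(G)$ and showing $|N\cap V|=2$, while you count involutions in $Z(G)$ versus the cyclic $Z(H)$ to extract a central involution outside $H$ — same content, and both exclude the modular group $C_{2^k}\rtimes C_2$ by exhibiting an abelian subgroup with three invariant factors. In b), forward direction, the paper simply induces a faithful character from a cyclic maximal subgroup to get a faithful $2$-dimensional representation; your detour through the classification of $2$-groups with a cyclic maximal subgroup plus the noncyclic-center lemma is heavier but valid. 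In the reverse direction the paper proceeds structurally: it shows $a^2\neq 1$ for $a\notin H$, that proper characteristic subgroups of a maximal $H$ are central, rules out $|G|=8$, concludes $\Omega_1(H)\subseteq Z(G)$, and derives $|G'|=2$ on the spot from $z=[a,x]$; you instead get $\rdim(G)\geq 3$ by applying Lemma \ref{2-group rdim2 lemma}, prove $Z(G)$ noncyclic by a representation-theoretic contradiction (a faithful character of a cyclic center would induce up to a faithful irreducible $2$-dimensional representation), and import $|G'|=2$ and $G'\leq Z(G)$ as standard Miller--Moreno facts rather than deriving them — legitimate, though you should note that faithfulness of that hypothetical induced representation uses that its kernel is normal, lies in $M$, and meets $Z(G)$ trivially. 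The final constructions of the faithful $3$-dimensional representation are the same idea in both proofs (induce a character killing no commutators from an index-$2$ subgroup, observe the kernel is cyclic and disjoint from $G'=\langle z\rangle$, then repair with a linear character); the difference is that the paper takes an arbitrary $\chi$ with $\chi(z)=-1$ and only needs to kill the unique involution of the kernel, whereas you choose $\theta$ on $Z(G)$ up front so that $\ker\sigma=\ker\theta$ is known exactly and a faithful character of it lifts through $G/G'$. Your version buys a cleaner identification of the kernel at the cost of a small case analysis on the three involutions of $Z(G)$; the paper's buys brevity by needing only the weakest property of the kernel.
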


\begin{proof}

a) Let $N$ be a proper nonabelian subgroup of $G$, and $Z=Z(G)$, which we have shown is not cyclic. Since $NZ$ is a nonabelian group with noncyclic center, $\rdim(NZ)\geq 3$ and $G = NZ$. This remains true if we replace $Z$ by any noncyclic subgroup of $Z$. So there exists a subgroup $V\cong C_2\times C_2\subset Z$ with $G = NV$. If $N\cap V$ was trivial then any element in $N$ of order 2 would, with $V$, generate $C_2^3$. Thus $N\cap V$ has order 2, since $V\not\subset N$. This implies $G \cong N\times C_2$. If $N$ contained a nonabelian proper subgroup $N_1$, then $N_1\times C_2$ would be nonabelian with noncyclic center, hence would have representation dimension 3 and contradicting minimality of $G$. Since $\rdim(N)=2$ and $N\not\supset C_2\times C_2$, lemma \ref{2-group rdim2 lemma} forces $N=Q_8$.

b) Let $H\subset G$ be a subgroup of index 2. If $H$ is cyclic then there is a faithful character $\chi:H\to \C^\times$, and then $\Ind_H^G(\chi)$ is a faithful 2-dimensional representation of $G$, which is imposisble. The subgroup $H$ cannot have more than 2 invariant factors since $G\not\supset C_2^3$. The first direction is proved.

Now assume that $G$ is a nonabelian 2-group whose index-2 subgroups are all abelian with two invariant factors. Let $H\subset G$ of index 2, so $H= C_{2^r}\times C_{2^s} = \langle x, y\rangle $ where $r\geq  s>0$. Let $a\in G\setminus H$. Since proper subgroups of $G$ are abelian, $a$ must centralize every proper subgroup of $H$ that it normalizes. So every characteristic subgroup of $H$ is contained in $Z(G)$, and thus $C_{2^{r-1}}\times C_{2^s}$ is central 
if $r>s$ and $C_{2^{s-1}}\times C_{2^{s-1}}$ is central if $r=s$. So $x^2, y^2\in Z(G)$ if $r=s$ and $x^2, y\in Z(G)$ if $r>s$.

If $a^2= 1$ then $\langle a, x^{2^{r-1}}, y^{2^{s-1}}\rangle=C_2^3$. This is a proper subgroup of the nonabelian group $G$, contradicting our assumption that all proper subgroups have at most 2 invariant factors. So $a^2\neq 1$. Now suppose $r=s=1$. Since $1\neq a^2\in H = C_2\times C_2$, we have $a^4=1$ and $|G|=[G:H]|H| = 8$. The subgroup generated by $a$ has index 2 in $G$, contradicting the assumption that all index-2 subgroups have two invariant factors. Thus either $r\geq s>1$ or $r>s=1$. In either case $Z(G)\cap H$ contains all elements of $H$ order 2, hence $Z(G)$ is not cyclic and $\rdim(G)\geq 3$. All that remains is to construct a faithful 3-dimensional representation of $G$, since our assumption implies $\rdim(H)=2$ for all maximal subgroups $H$ of $G$. 

We may assume without loss of generality that $x\not\in Z(G)$; this is automatic if $r>s$. We have $axa^{-1}=x^i y^j$ for some $i, j\geq 0$. Since $x^2 = ax^2a^{-1} = (x^iy^j)^2 = x^{2i}y^{2j}$, we have $j \in \{0, 2^{s-1} \}$ and $i\in \{1,  1+ 2^{r-1} \}$. In any case, $z:=axa^{-1}x^{-1}$ is of order 2, so $z\in Z(G)$. Since $x$ and $a$ do not commute, they generate $G$. Therefore $G/\langle z\rangle$ is abelian, and $z$ generates $[G, G]$.

Let $\chi:H\to \C^\times$ with $\chi(z)= -1$. Then $\rho:=\Ind_H^G(\chi)$ is a 2-dimensional representation of $G$. It is clear that $\ker(\rho)\cap aH$ is empty, so $\ker(\rho)\subset H$. If $\ker(\rho)$ was not cyclic then it would contain the three elements in $H$ of order 2, but $z\not\in \ker(\rho)$. We have $\ker(\rho)\neq 1$ since $\rdim(G)\geq 3$. Let $k$ be the unique element of order 2 in $\ker(\rho)$ and $\nu:G\to \C^\times$ with $\nu(k)=-1$; such a $\nu$ exists since the image of $k$ in $G/[G, G] = G/\langle z\rangle$ is nontrivial. The representation $\rho\oplus \nu$ is faithful. 
\end{proof}

The $2$-groups of the form described 
here have been classified - see \cite{MM03} and \cite{M01}. For example, the only group of order at most 16 on this list is the nontrivial semidirect product $C_4\rtimes C_4$. Since the actual construction of these groups would involve a large detour, we content ourselves to provide the relevant reference. 


All that remains is to classify the odd-order groups that are minimally faithful of degree 3. This naturally breaks into the cases of 3-groups and not 3-groups. 

\begin{proposition}
Suppose $G$ is a nonabelian $3$-group and minimally faithful of degree 3. Then either 

a) $G$ is the Heisenberg group of order 27, or  

b) $G = \langle a, b|\  a^{3^k} = b^3 =1 , \  b^{-1}ab = a^{1+3^{k-1}}  \rangle = C_{3^k}\rtimes C_3$ for some $k\geq 2$. 
\end{proposition}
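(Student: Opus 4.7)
The strategy is to combine the fact (from the corollary) that every proper subgroup of $G$ is abelian with the classification of minimal nonabelian $p$-groups due to Miller and Moreno \cite{MM03} and the constraint that $Z(G)$ is cyclic. First I would establish that $Z(G)$ is cyclic: since $G$ is a nonabelian $3$-group, its irreducible representations have dimensions in $\{1, 3, 9, \ldots\}$, so any faithful three-dimensional representation $\rho$ is either itself irreducible or a direct sum of three linear characters; the latter would force $G$ abelian, so $\rho$ is irreducible and Schur's lemma gives that $Z(G)$ is cyclic.

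Now, $G$ is a minimal nonabelian $3$-group. The Miller--Moreno (R\'edei) classification for odd $p$ produces two families: (i) the metacyclic family $\langle a, b \mid a^{p^m} = b^{p^n} = 1,\ b^{-1}ab = a^{1+p^{m-1}}\rangle$ with $m \geq 2, n \geq 1$, whose center is $\langle a^p, b^p\rangle$, and (ii) the non-metacyclic family $\langle a, b \mid a^{p^m} = b^{p^n} = [a,b]^p = [a,[a,b]] = [b,[a,b]] = 1\rangle$ with $m \geq n \geq 1$, whose center is $\langle a^p, b^p, [a,b]\rangle$. Specializing to $p = 3$ and imposing that $Z(G)$ be cyclic: in family (i), the independence of $a^3$ and $b^3$ in the abelianization forces $b^3 = 1$, i.e.\ $n = 1$, giving $C_{3^m} \rtimes C_3$ with $m \geq 2$ (part (b)); in family (ii), the independence of $a^3, b^3, [a,b]$ forces $m = n = 1$, giving the Heisenberg group of order $27$ (part (a)). Finally, each resulting group is easily verified to be minimally faithful of degree $3$: the earlier lemma gives $\rdim(G) \geq 3$, a faithful three-dimensional representation is constructed by inducing a suitable linear character from an index-$3$ abelian subgroup to $G$, and every proper subgroup is abelian of rank at most $2$ so has $\rdim \leq 2$.

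The principal obstacle is bibliographic, since \cite{MM03} is invoked in the paper only for the non-$p$-group case (Proposition~\ref{AbelianSubgroupsReferenceProposition}). A self-contained alternative is to take an abelian maximal subgroup $H \triangleleft G$ (necessarily of index $3$ and rank at most $2$), let $\phi$ denote conjugation by some $a \in G \setminus H$ (an automorphism of $H$ of order $3$, since the action of $G/H$ on $H$ must be nontrivial lest $G$ be abelian), and case-split on whether $H$ is cyclic. In the cyclic case $H = \langle x\rangle \cong C_{3^r}$ one forces $r \geq 2$ and $\phi(x) = x^{1+3^{r-1}}$ (as the unique order-$3$ subgroup of $\Aut(C_{3^r})$ requires $r \geq 2$), and shows that the extension splits via the congruence $1 + u^{-1} + u^{-2} \equiv 3 \pmod{3^r}$ with $u = 1+3^{r-1}$, which lets one modify $a$ by an element of $H$ to make $a^3 = 1$. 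In the rank-two case the cyclic-center condition together with the requirement that every proper subgroup be abelian (larger rank-two $H$ can be shown to contain a Heisenberg-type proper nonabelian subgroup) force $H \cong C_3 \times C_3$, identifying $G$ with the Heisenberg group of order $27$.
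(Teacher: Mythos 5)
Your primary argument is correct and takes a genuinely different route from the paper. You reduce to the R\'edei/Miller--Moreno classification of minimal nonabelian $p$-groups (every proper subgroup of $G$ is abelian by the corollary, so $G$ is minimal nonabelian), then filter the two families by the cyclic-center condition; the center computations ($C_{p^{m-1}}\times C_{p^{n-1}}$ in the metacyclic case, $C_{p^{m-1}}\times C_{p^{n-1}}\times C_p$ in the non-metacyclic case) are right and do isolate exactly $C_{3^k}\rtimes C_3$ with $k\geq 2$ and the Heisenberg group. The paper instead argues by hand: it picks a \emph{non-cyclic} abelian maximal subgroup $H$ (one exists, else $G$ would be cyclic), sets $B=\Omega_1(H)\cong C_3\times C_3$, takes $a\notin C_G(B)$ so that $G=\langle a,B\rangle$, normalizes the action of $a$ on $B$ to $az=za$, $aba^{-1}=bz$, and then uses cyclicity of $Z(G)\ni a^3, z$ to split into $k=1$ (Heisenberg) versus $z=a^{3^{k-1}}$ (metacyclic). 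Your route outsources the structure theory to a known classification and is shorter; the paper's is self-contained and produces the presentations directly. Your verification of the converse direction (induced characters, proper subgroups abelian of rank at most $2$) matches the paper's.

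However, the ``self-contained alternative'' you sketch at the end does not work as stated, and since you offer it precisely to avoid the bibliographic dependence, the flaw matters. The dichotomy ``the chosen maximal subgroup $H$ is cyclic or of rank two'' does not separate the two output families: $G=C_9\rtimes C_3$ has maximal subgroups isomorphic to both $C_9$ and $C_3\times C_3$ (namely $\Omega_1(G)=\langle a^3,b\rangle$), so if you happen to choose the rank-two one your argument would wrongly conclude that $G$ is Heisenberg. Worse, the parenthetical claim that a rank-two $H$ larger than $C_3\times C_3$ forces a proper nonabelian subgroup of $G$ is false: $C_{3^k}\rtimes C_3$ with $k\geq 3$ is minimal nonabelian yet has the maximal subgroup $\langle a^3,b\rangle\cong C_{3^{k-1}}\times C_3$. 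To repair the alternative, either restrict the rank-two branch to the case where \emph{no} maximal subgroup is cyclic (and show this forces exponent $3$ and order $27$), or follow the paper and work with $\Omega_1(H)$ rather than with $H$ itself. Your cyclic-case splitting argument (the congruence $1+u^{-1}+u^{-2}\equiv 3\pmod{3^r}$ for $u=1+3^{r-1}$, plus the observation that $a^3$ must be fixed by $\phi$ so its exponent is divisible by $3$) is correct and is actually more careful than anything in the paper.
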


\begin{proof}
Let $H$ be an index-3 subgroup, so $H$ is abelian and normal. If every index-3 subgroup of $G$ is cyclic then every proper subgroup is cyclic, which would force $G$ to be cyclic since it has odd order. So we may assume $H$ is not cyclic. Let $B\subset H$ be the subgroup generated by elements of order $3$, so $B\cong C_3\times C_3$. Since $Z(G)$ is cyclic, $B\not\subset Z(G)$. Let $a\in G\setminus C_G(B)$. Since $a$ and $B$ generate a nonabelian group, they generate $G$. Since $B$ is characteristic in $H$, $a$ normalizes $B$. 
We can choose generators $z$ and $b$ of $B$ so that $az = za$ and $aba^{-1} = bz$. In particular, $G$ is generated by $a$ and $b$. Suppose $a$ has order $3^k$. Since $a^3, z\in Z(G)$ and $Z(G)$ is cyclic, either $k=1$ or else $k\geq 2$ and $z = a^{3^{k-1}}$. The former case yields the Heisenberg group, which is easily shown to be minimally faithful of degree 3. The latter case yields the groups $C_{3^k}\rtimes C_3$ with the claimed presentations. Every subgroup is clearly abelian of rank at most 2. Let $\chi:\langle a \rangle \to \C^\times$ be a faithful character. Then $\Ind_{\langle a \rangle }^G (\chi)$ is a faithful 3-dimensional representation of $G$. 
\end{proof}

\begin{proposition}
Suppose $G$ is a nonabelian group of odd order, is not a 3-group, and is minimally faithful of degree 3. Then there exists a prime $p\geq 5$ such that either 

a) $G = C_p \rtimes C_{3^n}$ if $p\equiv 1 \pmod 3$, where $C_{3^{n-1}}$ is the center, or 

b) $G = (C_p\times C_p)\rtimes C_{3^n}$ if $p\equiv -1 \pmod 3$, where $C_{3^{n-1}}$ is the center. 
\end{proposition}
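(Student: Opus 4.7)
The plan is to apply Proposition~\ref{AbelianSubgroupsReferenceProposition} to obtain an explicit structure for $G$, then use representation theory together with the minimality hypothesis to pin down the primes and the action. Since $G$ has odd order, every proper subgroup of $G$ is abelian by the earlier corollary, so Proposition~\ref{AbelianSubgroupsReferenceProposition} (applicable because $G$ is nonabelian and not a $3$-group) gives $G = Q \rtimes C_{r^a}$ with $Q$ an elementary abelian $p$-group, $p \neq r$ odd primes, $|Q| \equiv 1 \pmod r$, and $Z(G) = C_{r^{a-1}}$.

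The first step is to show $r = 3$. Any faithful representation $G \to \GL_3(\C)$ is irreducible by the earlier lemma on odd-order nonabelian groups of representation dimension $3$, so $G$ admits an irreducible representation of degree $3$. On the other hand, Clifford theory for the semidirect product of an abelian group with a cyclic group shows that every irreducible complex representation of $Q \rtimes C_{r^a}$ is induced from a one-dimensional character of some subgroup $Q \rtimes T_\chi$, and hence has degree $[C_{r^a}:T_\chi]$, a power of $r$. Thus $r = 3$, and writing $n = a$ we have $G = Q \rtimes C_{3^n}$ with $Z(G) = C_{3^{n-1}}$. Since $Q$ is a proper elementary abelian $p$-subgroup and $\rdim(Q) = \rank(Q) \leq 2$, we have $|Q| \in \{p, p^2\}$ with $p \geq 5$ odd.

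If $|Q| = p$, the induced action $C_{3^n} \to \Aut(C_p) \cong C_{p-1}$ is nontrivial (otherwise $G$ would be abelian) with kernel $C_{3^{n-1}}$, so the image has order $3$ and $p \equiv 1 \pmod 3$, giving case (a). If $|Q| = p^2$, the analogous computation shows the image of $C_{3^n}$ in $\GL_2(\FF_p)$ has order $3$; the key step, which I expect to be the main obstacle, is to show this $C_3$-action on $Q$ is irreducible over $\FF_p$. Assuming a proper $C_{3^n}$-invariant line $L \subset Q$, the action on $L$ must be nontrivial (otherwise $L$ would enlarge the center), so $L \rtimes C_{3^n}$ is a proper nonabelian odd-order subgroup of $G$ with center $C_{3^{n-1}}$, and by the earlier lemma it has $\rdim \geq 3$, contradicting minimal faithfulness of $G$. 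Hence the order-$3$ element of $\GL_2(\FF_p)$ has irreducible minimal polynomial $x^2 + x + 1$, which is equivalent to $p \not\equiv 1 \pmod 3$ and thus $p \equiv -1 \pmod 3$, giving case (b).
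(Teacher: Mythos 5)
Your argument is correct and reaches the same structural conclusion, but it routes the two key steps differently from the paper. To decide which prime plays the role of the cyclic acting group, the paper compares orders of automorphism groups: since $|\Aut(C_3)|=2$ and $|\Aut(C_3^2)|=2^4\cdot 3$ are not divisible by any prime $p\geq 5$, the $3$-Sylow cannot be the normal elementary abelian factor, so it must be the cyclic one. You instead use that the faithful degree-$3$ representation is irreducible and that every irreducible representation of $Q\rtimes C_{r^a}$ has degree a power of $r$ (Clifford/Mackey, or just It\^{o}'s theorem); this is a cleaner, more conceptual argument and generalizes more readily to higher degree. For the case $|Q|=p^2$, your ``no invariant line'' argument and the paper's eigenspace decomposition $P=P_0\times P_1\times P_2$ relative to the roots of $t^2+t+1$ are essentially the same idea in different clothing: both show the order-$3$ automorphism has no $\FF_p$-rational eigenvector, forcing $x^2+x+1$ to be irreducible mod $p$, i.e.\ $p\equiv -1\pmod 3$. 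Two small points to tighten: first, Proposition \ref{AbelianSubgroupsReferenceProposition} presents a dichotomy whose first branch is ``$G$ is a $q$-group'' for \emph{some} prime $q$, so ``not a $3$-group'' alone does not place you in the second branch --- you also need the earlier lemma giving that $3$ divides $|G|$ (so $G$ is not a $q$-group for $q\neq 3$ either). Second, the paper's proof also verifies that the groups in (a) and (b) really do admit faithful $3$-dimensional representations; this is not needed for the one-directional statement as written, but it is what makes the classification sharp, so be aware you have proved only the forward implication.
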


\begin{proof}
We know that $3$ divides $|G|$ and every proper subgroup of $G$ is abelian. From proposition \ref{AbelianSubgroupsReferenceProposition}, we have $|G| = 3^a p^b$ for some prime $p>3$. Let $P$ be a $p$-Sylow subgroup and $Q$ a $3$-Sylow subgroup. We know one of them is cyclic and the other normal and elementary abelian (of rank at most 2). Since $|\Aut(C_3^2)| =2^4\cdot3$ and $|\Aut(C_3)|=2$, the subgroup $Q$ cannot be normalized by $P$ without being centralized by $P$. Thus $Q$ is cyclic and $P$ is elementary abelian. A generator $x$ of $Q$ must act on $P$ by an automorphism of order 3, in order that $G$ contains no proper nonabelian subgroups. 

Suppose $P= C_p$. Then $3$ divides $|\Aut(P)|=p-1$ and $G=C_p\rtimes C_{3^n}$. One obtains $\rdim(G) = 3$ by inducing a faithful character of the index-3 cyclic subgroup $C_p\cdot Z(G) \cong C_{3^{n-1}p}$. 

Suppose $P= C_p\times C_p$. Then $3$ divides $|\Aut(P)| =(p+1)p(p-1)^2$ and $G=(C_p\times C_p)\rtimes C_{3^n}$. If $p\equiv 1\pmod 3$, let $\alpha, \alpha^2\in \Z/p\Z$ be the two roots of $t^2 + t+1=0\pmod p$. Then $P = P_0\times P_1\times P_2$ where $P_i:=\{y\in P: xyx^{-1} = y^{\alpha^i} \}$. If there exists $1\neq y\in P_1$ or $1\neq y\in P_2$ then the subgroup generated by $x$ and $y$ will necessarily be a nonabelian proper subgroup of $G$. But if $P=P_0$ then $G$ is abelian. Thus $p\equiv -1 \pmod 3$. We need only show that $G$ has a faithful 3-dimensional representation. Let $H =C_p\times C_p\times C_{3^{n-1}} $ and $\chi:H\to \C^\times$ have kernel of order $p$. Let $\rho:=\Ind_H^G(\chi)$. Then $\dim(\rho)=[G:H] =3$. It is easy to see that $\ker(\rho)\subset H$, and from there that $\ker(\rho)\subset \ker(\chi)$. If $1\neq y\in \ker(\rho)$ then $xyx^{-1}\not\in \ker(\chi)$ since $xyx^{-1}$ does not lie in the subgroup generated by $y$. So $xyx^{-1}\not\in \ker(\rho)$, which contradicts the fact that $\ker(\rho)$ is a normal subgroup of $G$. Thus $\rho$ is faithful even though $\chi$ is not. 
\end{proof}

\section{Further Work}

There are several natural questions that one can pose about the class of minimally faithful finite groups $G$ of degree $n$, for $n>3$. Are they always solvable? Which $p$-groups are minimally faithful? Which $G$ can be realized inside $\SL_n(\C)$? It is perhaps computationally intractable to obtain a full classification for substantially larger $n$.

In another direction, the notion of minimally faithful groups of degree $n$ can be readily defined over an arbitrary field $F$ instead of $\C$. There is still much that can be said for minimally faithful groups of degree 2; we state the results here without proof. Any finite subgroup of $\GL_1(F)$ is cyclic. It follows from this and simple calculations in $\GL_2(\overline{F})$ that the only candidates for nonabelian minimally faithful groups of degree 2 are a subset of those for $\C$, namely $Q_8$ and $C_p\rtimes C_{2^n}$; these work if and only if $F$ contain a 4th root of unity and a $2^np$th root of unity, respectively. In particular, if $F$ has characteristic 2 then all minimally faithful groups of degree 2 over $F$ are abelian. An abelian minimally faithful group over $F$ need not be elementary abelian. For example, one can have $G=C_{p^a}$ with $F$ of characteristic $p$ and containing a $p^{a-1}$-root of unity but not a $p^a$th root of unity, and such that the polynomial $(t^{p^a}-1)/(t^{p^{a-1}}-1)$ has an irreducible quadratic factor. A root $r$ of such a quadratic factor generates a quadratic extension $F(r)/F$, and multiplication by $r$, as an element of $\Aut_F(F(r))\cong \GL_2(F)$, generates a faithful representation of $C_{p^a}$. This arises, for example, if $F=\Q$ and $p^a=4$. Similarly, if $F$ has characteristic $p$ then $C_p$ is not a subgroup of $\GL_1(F)$ but is a subgroup of $\GL_2(F)$ via the matrices $\begin{bmatrix} 1 & i \\ 0 & 1 \end{bmatrix}$, $0\leq i\leq p-1$. Finally, one has the groups $C_p\times C_p$, which will be minimally faithful of degree 2 if and only if $F$ contains a $p$th root of unity. 

One could also broaden the class of groups and representations considered beyond the finite class. If $G$ is an arbitrary group, one can define its representation dimension to be the minimum cardinality $\kappa$ such that $G$ admits a faithful representation on a vector space of dimension $\kappa$. Then one could define $G$ to be minimally faithful of degree $\kappa$ if, for all proper subgroups $H$ of $G$, the representation dimension of $H$ is a strictly smaller cardinal than $\kappa$. For example, if $F$ is a finite field and $G=\Z_{p^\infty}$ is the Pr{\"u}fer $p$-group (the group of $p$-power torsion in $\C^\times$) then $G$ has infinite representation dimension over $F$, while every proper subgroup $H$ of $G$ is finite (and cyclic), so $H$ has finite representation dimension over $F$. Thus $G$ is minimally faithful over any finite field.

\end{document}